\newcommand{\ga}{\alpha}
\newcommand{\bbn}{\mathbb{N}}
\newcommand{\dhr}{\mathrel{\lhook\joinrel\relbar\kern-.8ex\joinrel\lhook\joinrel\rightarrow}}
\newcommand{\seqk}[1]{(#1 _k)_{k\in\bbn}}
\newcommand{\seqm}[1]{(#1 _m)_{m\in\bbn}}
\newtheorem{thm}{Theorem}[section]
\newtheorem{lem}[thm]{Lemma}
\newtheorem{prop}[thm]{Proposition}
\newtheorem{rem}[thm]{Remark}
\DeclareMathAlphabet{\mathpzc}{OT1}{pzc}{m}{it}
\numberwithin{equation}{section}
\begin{document}
\bibliographystyle{plain}

\title{A Class Of Elliptic Equations with Interior Degeneration}

\author{Patrick Guidotti}
\address{University of California, Irvine\\
Department of Mathematics\\
340 Rowland Hall\\
Irvine, CA 92697-3875\\ USA }
\email{gpatrick@math.uci.edu}

\begin{abstract}
A class of linear degenerate elliptic equations inspired by nonlinear
diffusions of image processing is considered. It is characterized by
an interior degeneration of the diffusion coefficient. It is shown
that no particularly natural, unique interpretation of the equation is
possible. This phenomenon is reflected in the behavior of numerical
schemes for  its resolution and points to similar issues that
potentially affect its nonlinear counterpart.
\end{abstract}

\keywords{Weakly degenerate, elliptic, analytic semigroups,
  regularization}
\subjclass[1991]{35J20, 35J25, 35J70, 35K20, 35K65}

\maketitle

\section{Introduction}
The Perona-Malik equation has attracted a fair amount of interest
since its introduction in the early 1990s mainly because of an
apparent dichotomy between its mathematical ill-posedness and its
efficacy as an image processing tool. In the mathematical literature
regularizations and relaxations of various kinds have been proposed and
analyzed; we refer to \cite{G131} for an overview. Here the focus is
on the regularization introduced in \cite{GL07} which replaces the
gradient edge detection of Perona-Malik with one using fractional
derivatives. The equation reads 
\begin{equation}\label{pme}\begin{cases}
  \dot u=\nabla\cdot\bigl(\frac{1}{1+|\nabla ^{1-\varepsilon}u|^2}\nabla
  u\bigr) &\text{in }B\text{ for }t>0,\\u(0)=u_0&\text{in }B,
\end{cases}
\end{equation}
for a given, fixed $\epsilon\in(0,1]$ and an unknown curve of periodic
functions $u(t,\cdot):B\to \mathbb{R}$ on the normalized unit box (of
$\mathbb{R}^2$ in applications to image processing). The initial datum
$u_0$ is a given corrupted image that needs to be enhanced. The
classical Perona-Malik equation corresponds to setting $\epsilon =0$
in \eqref{pme} and is known for its strong edge
preservation/sharpening capabilities. This is related to its
forward-backward nature (see \cite{G131} for instance). The 
distinguishing feature of \eqref{pme} is the combination of its 
mathematical forward parabolic character, albeit degenerate, and its
strong edge preserving properties. Latter are due to the fact that
characteristic functions of smooth sets, piecewise constant functions
more in general, turn out to be stationary solutions of
\eqref{pme}. This was the motivation beyond the introduction of
\eqref{pme}. A transition between non-trivial dynamical behavior for
piecewise constant initial data has been observed to occur in
numerical experiments as the parameter $\epsilon$ crosses the
threshold value $\frac{1}{2}$. If it is smaller, such initial data are
preserved, reflecting their equilibrium status for \eqref{pme}. If it
is larger, however, (numerical) solutions typically (more on this
later) tend to exhibit fast convergence to a uniform state. In the
context of smooth solutions, this
transition from local to global well-posendess was analytically
confirmed in \cite{G122}, where global existence and convergence
to trivial steady-states are established for smooth enough initial data
for a variant of \eqref{pme} in a one-dimensional setting. This is
somewhat unsatisfactory since most interesting (numerical) solutions
of \eqref{pme} are not smooth enough and, while piecewise constant
solutions can be viewed as stationary for the evolution, no weak
solvability theory is available for any low regularity class of
functions including them. A significant impediment to the development
of a comprehensive weak solvability theory is the (conjectured)
non-existence of function spaces containing piecewise constant
functions for which weak solutions can be constructed. In this paper,
the focus is on a class of linear equations closely related to a
modification of \eqref{pme} given by 
\begin{equation}\label{mpme}\begin{cases}
  \dot u=\nabla\cdot\bigl(\frac{1}{1+N_\varepsilon^2(|\nabla
    u|)} \nabla u\bigr)=\nabla\cdot\bigl( a_\varepsilon(u)\nabla
  u\bigr) &\text{in }B\text{ for }t>0,\\u(0)=u_0&\text{in }B, 
\end{cases}
\end{equation}
where the convolution operator $N_\varepsilon$ is a Fourier
multiplication operator defined through
\begin{equation}\label{kernel}
 N_\varepsilon =\mathcal{F}^{-1}\operatorname{diag}\big[(|k|^{-\varepsilon
 })_{k\in \mathbb{Z}^n \setminus\{ 0\}}\bigr]\mathcal{F} \sim
 \frac{1}{|x|^{n-\varepsilon}}*,
\end{equation}
for $n=1,2$, where $\mathcal{F}$ denotes the Fourier transform on
$\operatorname{L}_\pi^2(B)$. As a step towards understanding 
this nonlinear equation 
for relevant non-smooth initial data, one can consider an initial
datum in the form of a characteristic function of a subset of the
circle ($n=1$) or of the torus ($n=2$), with smooth boundary $\Gamma$, and
study the linear equation
\begin{equation}\label{lmpme}\begin{cases}
  \dot u=\nabla\cdot\bigl(a_\varepsilon(u_0)\nabla u\bigr) &\text{in
  }B\text{ for }t>0,\\u(0)=u_0&\text{in }B, 
\end{cases}
\end{equation}
It will be shown that, in this case,
\begin{equation}\label{kernelasym}
 N_\varepsilon(|\nabla u|)(x)\sim
 \frac{1}{d(x,\Gamma)^{1-\varepsilon}}\text{ as }x\sim \Gamma,
\end{equation}
for the distance $d(x,\Gamma)$ to the boundary $\Gamma$ and thus that
$$
 a_\varepsilon(u_0)=\frac{1}{1+N_\varepsilon^2(|\nabla u_0|)}\sim
 d(x,\Gamma)^{2-2 \varepsilon}\text{ as }x\sim \Gamma,
$$
would hold for the corresponding diffusivity. It will be shown that,
for $\varepsilon>\frac{1}{2}$, equation \eqref{lmpme} possesses a
unique solution which instantaneously regularizes and eventually
converges to a trivial steady-state. Since certain piecewise constant
functions can also be seen as steady-states, non-uniqueness ensues.
While it seems natural to view \eqref{lmpme} as ``the'' gradient flow
engendered by the energy functional given by
$$
 \int_B \alpha |\nabla u|^2\, dx,
$$
for $\alpha=a_\varepsilon(u_0)$ and $\varepsilon>1/2$, latter does not
appear to have a preferred, unique domain of 
definition. For this reason, it cannot be claimed that \eqref{lmpme}
possesses a natural and unambiguous interpretation. It is in fact
possible to construct three distinct gradient flows compatible with the above 
energy which exhibit different behaviors. One which regularizes
initial data immediately and averages them out exponentially fast, as
is naturally expected of a heat equation, and, others, which preserves
certain discontinuities forever. This ambiguity is reflected at the
numerical level as a grid-choice phenomenon. In other words different 
solutions can be observed numerically even with the same type of
discretization depending on the choice of discretization points. 
In the ``regularizing interpretation'', the degenerate elliptic
operator can be shown to generate an analytic semigroup on
$\operatorname{L}^2_\pi$. In this case, the evolution can be viewed as
the vanishing viscosity limit for the equation with diffusivity
$\delta+a_\varepsilon(u_0)$ for $\delta>0$. While this is possibly the
most natural interpretation of the degenerate equation, others cannot
be neglected as they could help explain numerical observations. In
fact, many nonlinear diffusions have been 
utilized in image processing especially because of their ability to
preserve edges. This paper shows that, even in the linear case,
extreme care is required when using such methods as they are not
assured to deliver consistent results, nor do they provide assurances
that the output images possess properties that are naturally connected
to the underlying ``true'' image that one is purportedly trying to
recover. This will be demonstrated with a simple one dimensional
discretization.  

Elliptic and parabolic equations with interior degenerations have not
been studied extensively in the literature. The approach taken in this 
paper is most akin to that utilized by \cite{FGG12} in a one-dimensional
context in that it shows, in particular, generation of an analytic semigroup on
$\operatorname{L}^2$. Recently, a general framework for linear and nonlinear
degenerate parabolic equations has been developed in \cite{Ama16} using
different techniques based on the concept of singular manifolds. These
techniques can be adapted to construct one of the possible solutions in
a nonlinear context. This is done in \cite{GS16}.

The paper is organized as follows. In the next two sections, it is shown
how the seemingly natural energy functional for \eqref{lmpme} admits
distinct and valid interpretations which lead to different
evolutions. In the regularizing case, the associated operator will be
shown to generate analytic, contraction semigroups. Spectral
properties related to compact embeddings and the validity of a
Poincar\'e inequality will be highlighted. Additionally, two different
flows will be presented which can preserve singularities. In Section 4 the
one-dimensional case will be considered to show how numerical
implementations can indeed produce  at least two distinct types of
solutions. Interestingly, one of them is incompatible with any of the
interpretations presented in Section 3. It turns out to be compatible
with strongly degenerate equations. Section 5 deals with the vanishing
viscosity limit via $\Gamma$-convergence.
\section{The Setup and The Energy Functional}
The main focus of this paper is on linear weakly degenerate elliptic
problems with diffusivity belonging to a specific class of
functions. Let $n=1,2$ and $B=[-1,1)^n$ be the periodicity
box. Consider bounded periodic functions $\alpha:B\to[0,\infty)$ which
vanish only on a smooth closed curve $\Gamma\subset B$, if $n=2$, or on
$\Gamma=\{\pm 1/2\}$, if $n=1$, and satisfy
\begin{equation}\label{alphahypo}
 \frac{1}{c}\,d(x,\Gamma)^\sigma\leq\alpha(x)\leq
 c\,d(x,\Gamma)^\sigma,\: x\in B, 
\end{equation}
for some $1\leq c<\infty$,
$\sigma\in[0,1)$, and that are otherwise smooth on
$B\setminus\Gamma$ (at least H\"older continuous of exponent $\sigma$,
if not stated otherwise). The function $d(\cdot,\Gamma)$ given by  
$$
 d(x,\Gamma)=\inf_{y\in\Gamma}|x-y|,\: x\in B,
$$
represents the distance function to the set $\Gamma$. The collection
of all coefficient functions $\alpha$ of the above type is denoted by
$\operatorname{D}^\sigma_\pi$. For 
$\alpha\in \operatorname{D}^\sigma_\pi$ consider the elliptic problem
\begin{equation}\label{lewd}
 \begin{cases}
  -\nabla\cdot\bigl( \alpha(x)\nabla u\bigr)=f&\text{in }B,\\
  u\text{ periodic}&
 \end{cases}
\end{equation}
for $f\in \operatorname{L}^2_\pi(B)$, the space of functions which are
square integrable and periodic (hence the subscript $\pi$). The corresponding
evolutionary problem, given by
\begin{equation}\label{lpwd}
 \begin{cases}
  \dot u-\nabla\cdot\bigl( \alpha(x)\nabla u\bigr)=0&\text{in }B,\\
  u\text{ periodic},&
 \end{cases}
\end{equation}
is also of interest. In the case that $\sigma=0$, the diffusivity
cannot obviously be required to vanish on $\Gamma$ and equation
\eqref{lewd} is strongly elliptic, while, for $\sigma\in(0,1)$, it is a
so-called weakly degenerate elliptic problem. For this nomenclature
and basic results in the elliptic case, it is refered to \cite{KS87},
where a weakly degenerate  equation of type \eqref{lewd} is
characterized in particular by the conditions that
$$
 0\leq \alpha\in \operatorname{L}^1(B)\text{ and that
 }\frac{1}{\alpha}\in\operatorname{L}^1(B). 
$$
Problems \eqref{lewd} and \eqref{lpwd} are closely related to the
energy functional
\begin{equation}\label{efctnal}
 E_\alpha(u)=\int_B \alpha |\nabla u|^2\, dx.
\end{equation}
Observe that this functional can be thought of as being defined on the
weighted space 
$$
 \operatorname{H}^1_{\pi,\alpha}(B):=\big\{ u\in \operatorname{L}
 ^2_\pi(B)\, :\, |\nabla u|\in \operatorname{L}^1_\pi(B)\text{ and }\int
 \alpha(x)|\nabla u(x)|^2\, dx<\infty\big\} 
$$
which is a Banach space with respect to the norm
$$
 \| u\| _{\operatorname{H}^1_{\pi,\alpha}(B)}:=\big(\| u\| _2^2+\|
 \sqrt{\alpha}\nabla u\| _2^2\bigr )^{1/2},
$$
since the requirement that $\nabla u$ be a regular distribution does
not need to be reiterated in the norm in view of the validity of
$$
 \int _B|\nabla u|\, dx\leq \bigl( \int _B \frac{1}{\alpha(x)}\, dx
 \bigr)^{1/2} \bigl( \int _B \alpha |\nabla u|^2\, dx\bigr)^{1/2},\: u\in
 \operatorname{H}^1_{\pi,\alpha}(B),
$$
thanks to the Cauchy-Schwarz inequality.
On the other hand, if the requirement that $|\nabla u|$ be integrable
is dropped, the energy functional can be viewed as being defined on 
$$
 \widetilde{\operatorname{H}}^1_{\pi,\alpha}(B):=\big\{ u\in \operatorname{L}
 ^2_\pi(B)\, :\, \nabla u\in \mathcal{M}_\pi(B)\text{ and }\sqrt{\alpha}|\nabla u|\in
 \operatorname{L}^2_\pi(B)\big\},
$$
where $\mathcal{M}_\pi(B)$ is the space of periodic (vector-valued)
Radon measures on $B$, dual to the space $\operatorname{C}_\pi(B)^n$ of periodic
continuous functions on $B$, or, even on the larger space obtained
simply requiring that $\sqrt{\alpha}\nabla u$ be square integrable for
the distributional gradient of $u$. The main reason to consider the space
$\widetilde{\operatorname{H}}^1_\pi(B)$ along with the functional
$E_\ga$, which will be denoted by $\widetilde{E}_\alpha$ if considered
with this domain, is that it
contains the characteristic function $\chi _\Omega$ of the smooth domain
$\Omega$ bounded by the curve $\Gamma$ for $n=2$ or, of the interval
$[-1/2,1/2]$ for $n=1$. Indeed, one has that
$$
 \nabla \chi_\Omega=\nu_\Gamma \delta_\Gamma \text{ and }
 \chi_{\Omega}'=\delta_{1/2}-\delta_{-1/2},
$$
for $n=2$ and $n=1$, respectively, and therefore that
$\sqrt{\alpha}\,\nabla\chi_\Omega=0$ as well as 
$\sqrt{\alpha}\,\chi '_{[-1/2,1/2]}=0$. Here $\nu_\Gamma$
denotes the unit outward normal to $\Gamma$, while $\delta_\Gamma$
represents the line integral distribution along $\Gamma$.
Observe that these functions are non-trivial
minimizers of $\widetilde{E}_\ga$ and they might play a role in the
evolution of the corresponding gradient flow. Equation \eqref{lewd}
could arguably also be interpreted as a system for a pair $(u_i,u_o)$
of functions defined on the connected components $\Omega_i=\Omega$
and $\Omega_o$ of $B \setminus\Gamma$ and belonging to the space
$$
 \overline{\operatorname{H}}^1_{\pi,\alpha}(B):=
 \operatorname{H}^1_{\alpha}(\Omega_i)\times
 \operatorname{H}^1_{\pi,\alpha}(\Omega_o),
$$
and where the energy functional is now interpreted as
\begin{equation}\label{ebarfctnal}
  \overline{E}_\alpha(u_i,u_0)=\int _{\Omega_i}\alpha |\nabla
  u_i|^2\, dx+\int _{\Omega_0}\alpha |\nabla u_o|^2\,
  dx=E_\alpha(u_i)+E_\alpha(u_o),
\end{equation}
where the last identity holds with the understanding that the energy
functionals are for functions with the appropriate domain of
definition. This last interpretation is justified by the fact that
$\operatorname{L}^2_\pi(B)=\operatorname{L}^2(\Omega_i)\oplus
\operatorname{L}^2_\pi(\Omega _o)$, so that the energy functional, if
extended by the value $\infty$, can be thought of as being defined on
$\operatorname{L}^2_\pi(B)$.
\section{The Different Flows}
\subsection{The Regularizing Case}\label{regcase}
It is easily seen that that compactly supported test functions
belong to $\operatorname{H}^1_{\pi,\alpha}(B)$, i.e. that
$$
 \mathcal{D}(B)\subset \operatorname{H}^1_{\pi,\alpha}(B),
$$
and that
$$
 \mathcal{D}_\pi(B)=\operatorname{C}^\infty_\pi(B)\subset
 \operatorname{H}^1_{\pi,\alpha}(B), 
$$
where the subscript $\pi$ in the first space indicates that periodic test-functions
are considered. It is natural to view \eqref{lewd} with $f\equiv 0$ as
the stationarity condition for $E_\alpha$ given by \eqref{efctnal} and
defined on $\operatorname{H}^1_{\pi,\alpha}(B)$. Latter happens to be the natural
space which makes the functional coercive (see below). The form
associated to $E_\alpha$ is given by
\begin{equation}\label{form}
 a_\alpha(u,v):=\int _B\alpha\nabla u\cdot\nabla v\, dx,\: u,v\in
 \operatorname{H}^1_{\pi,\alpha}(B),
\end{equation}
and induces the operator
\begin{equation}\label{formop}
 \mathcal{A}_\alpha : \operatorname{H}^1_{\pi,\alpha}(B)\to
 \operatorname{H}^1_{\pi,\alpha}(B)'=: \operatorname{H}^{-1}_{\pi,\alpha}(B),
\end{equation}
given by
$$
 \mathcal{A}_\alpha u:=\bigl[ v\mapsto \int _B\alpha \nabla u\cdot\nabla v\,
 dx\bigr]\in \operatorname{H}^{-1}_{\pi,\alpha}(B).
$$
Clearly the form $a_\alpha$ is non-negative and symmetric. Next a few properties of
the space $\operatorname{H}^1_{\pi,\alpha}(B)$ are collected which are important for
the understanding of the weakly degenerate problem
\eqref{lewd}. Notice that proofs are mostly given for $n=2$ since the
one dimensional case is simpler and can be handled in a perfectly
analogous manner.

Take a compactly supported, radial and radially decreasing, non-negative, smooth
testfunction $\varphi\in \mathcal{D}(\mathbb{R}^n)$ with
$\operatorname{supp}(\varphi)\subset B$ and with
$$
 \int _B \varphi(y)\, dy=1.
$$
Define an associated mollifier $\varphi _m$ in the usual way  
$$
 \varphi _m(x)=m\,\varphi(mx)\text{ and } u_m(x)=\varphi _m*_\pi u(x):=\int _B\varphi
 _m(x-y)u(y)\, dy,\: x\in B.
$$
Alternatively, one can think of the convolution on the torus and write
$$
 \int _B \varphi _m(x-y|2)u(y)\, dy,\: x\in B,
$$
where $(x-y|2)$ denotes addition modulo 2 component
by component.  In order not to overburden the notation, the subscript
$\pi$ in the convolution will be dropped.
\begin{lem}\label{alphalemma}
It holds that $\alpha\,\varphi_m*
\frac{1}{\alpha}\in\operatorname{L}^\infty_\pi(B)$
\end{lem}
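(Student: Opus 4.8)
The plan is to reduce the claim to a purely metric estimate via the two-sided bound \eqref{alphahypo} and then to control the resulting convolution by comparing the scale $1/m$ of the mollifier with the distance $d(x,\Gamma)$. Writing $R$ for the radius of $\operatorname{supp}\varphi$, so that $\varphi_m(x-\cdot)$ is supported in the ball of radius $R/m$ about $x$, the hypothesis \eqref{alphahypo} gives the pointwise bound
\begin{equation*}
 \alpha(x)\,\bigl(\varphi_m*\tfrac{1}{\alpha}\bigr)(x)\leq c^2\, d(x,\Gamma)^\sigma\int_B\varphi_m(x-y)\,d(y,\Gamma)^{-\sigma}\,dy,
\end{equation*}
so everything reduces to bounding the right-hand side uniformly in $x$ (and, with an eye toward the density and mollification arguments to follow, uniformly in $m$). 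Note first that $d(\cdot,\Gamma)^{-\sigma}\in\operatorname{L}^1_\pi(B)$ because $\sigma<1$ and $\Gamma$ has codimension one, which already settles membership for each fixed $m$; the content is the uniformity in $m$.

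I would then split according to whether $x$ is far from or close to $\Gamma$ on the scale $1/m$. If $d(x,\Gamma)\geq 2R/m$, then for every $y$ in the support one has $|x-y|\leq R/m\leq\tfrac12 d(x,\Gamma)$, whence the triangle inequality for the distance function yields $d(y,\Gamma)\geq\tfrac12 d(x,\Gamma)$. Substituting $d(y,\Gamma)^{-\sigma}\leq 2^\sigma d(x,\Gamma)^{-\sigma}$ and using $\int_B\varphi_m=1$ bounds the right-hand side by $2^\sigma$, independently of $x$ and $m$. This case also subsumes the trivial interior region where $\alpha$ is bounded away from zero.

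The remaining, and main, case is $d(x,\Gamma)<2R/m$, where the singularity of $1/\alpha$ is actually felt. Here I would use the smoothness of $\Gamma$: on a fixed tubular neighborhood the nearest-point projection onto $\Gamma$ is well defined and $d(\cdot,\Gamma)$ is comparable to the normal coordinate, so Fubini in arclength--normal coordinates gives, for $\rho=R/m$ small,
\begin{equation*}
 \int_{|x-y|\leq \rho}d(y,\Gamma)^{-\sigma}\,dy\lesssim \rho\int_0^{C\rho}r^{-\sigma}\,dr\lesssim \rho^{\,n-\sigma},
\end{equation*}
the factor $\rho$ (for $n=2$) coming from the length of arc meeting the ball and $\rho^{1-\sigma}$ from the normal integration; for $n=1$ the arclength factor is absent and the same exponent $n-\sigma$ results. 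Since $\varphi_m$ concentrates at scale $1/m$ with unit mass, $\|\varphi_m\|_\infty\lesssim m^n$, while $d(x,\Gamma)^\sigma\lesssim(R/m)^\sigma$, so the three contributions combine as $m^{-\sigma}\cdot m^n\cdot m^{-(n-\sigma)}=m^0$; the powers of $m$ cancel and one is left with a constant depending only on $c$, $\sigma$, $\varphi$ and the geometry of $\Gamma$.

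The main obstacle is precisely this near-$\Gamma$ estimate: it is where the geometry of the smooth curve enters, and one must ensure the adapted coordinate change and the exponent $n-\sigma$ hold uniformly in $x$. This is fine once $1/m$ is smaller than the collar width $\delta_0$ of the tubular neighborhood; the remaining bounded range of $m$ with $R/m\geq\delta_0$ is disposed of crudely from $\|\varphi_m*\tfrac1\alpha\|_\infty\leq\|\varphi_m\|_\infty\|\tfrac1\alpha\|_{\operatorname{L}^1}$ together with the boundedness of $\alpha$. Combining the two regions then yields a bound on $\|\alpha\,\varphi_m*\tfrac1\alpha\|_{\operatorname{L}^\infty_\pi(B)}$ independent of $m$, which in particular proves the stated membership.
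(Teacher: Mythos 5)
Your proof is correct, and it rests on the same two pillars as the paper's: the two-sided bound \eqref{alphahypo} reduces everything to the distance function, and for $n=2$ the tubular-neighborhood coordinates reduce the singular integration to a one-dimensional normal variable; in both arguments the crux is the local integrability of $d(\cdot,\Gamma)^{-\sigma}$ for $\sigma<1$. Where you genuinely differ is in the decomposition. The paper covers the region near $\Gamma$ by $O(m)$ layers of width $1/m$ and estimates the convolution separately on the innermost layer (by an explicit antiderivative computation producing $m^{1-\sigma}\cdot m^{-(1-\sigma)}$) and on each outer layer indexed by $k\geq 2$ (via the ratio bound $\bigl(\tfrac{k+2}{k-1}\bigr)^\sigma\leq c$). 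You collapse all the outer layers into the single case $d(x,\Gamma)\geq 2R/m$, dispatched in one line by $d(y,\Gamma)\geq\tfrac12 d(x,\Gamma)$ on the support of $\varphi_m(x-\cdot)$, and you replace the explicit near-field computation by the cruder but sufficient bound $\|\varphi_m\|_\infty\cdot\int_{\mathbb{B}(x,R/m)}d(y,\Gamma)^{-\sigma}\,dy\lesssim m^n\cdot m^{-(n-\sigma)}$. This buys a shorter and more transparent argument, with the cancellation $m^{-\sigma}\cdot m^{\sigma}=1$ made explicit, and it also makes fully explicit the uniformity of the bound in $m$, which the paper needs later (in Lemma \ref{density}) but leaves somewhat implicit. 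One small point to flag: the paper writes $\varphi_m(x)=m\,\varphi(mx)$, which is the correct $\operatorname{L}^1$ normalization only for $n=1$; your $\|\varphi_m\|_\infty\lesssim m^n$ tacitly (and correctly) uses the $n$-dimensional normalization $m^n\varphi(m\cdot)$, which is what the paper itself uses in its two-dimensional computation.
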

\begin{proof}
Consider first the one dimensional case $n=1$. Fix $\delta>0$ such
that
$$
 \frac{1}{c}|x\pm 1/2|^\sigma\leq \alpha(x)\leq c |x\pm 1/2|^\sigma, 
$$
for $c\geq 1$ and $|x\pm 1/2|\leq 3 \delta$. Now, if $|x\pm 1/2|\geq
2\delta$, one has that
$$
 \alpha(x)\varphi _m*\frac{1}{\alpha}\leq\frac{c}{2^\sigma
   \delta^\sigma}\| \alpha \| _\infty,
$$
provided $m\geq 1/\delta$. If, on the other hand $|x-1/2|<2\delta$
(the case when $|x+1/2|<2 \delta$ can be handled in the same way),
then
$$
 x\in[1/2-2/m,1/2+2/m]\cup\bigcup_{k\geq
   2}[1/2-(k+1)/m,1/2-k/m]\cup[1/2+k/m,1/2+(k+1)/m],
$$
where the union ends when the interval $[1/2-2 \delta ,1/2+2\delta]$ is
completely covered. While a finite number of intervals suffice for any finite $m$,
the number increases with $m$. If $x$  belongs to the interval
$[1/2-2/m,1/2+2/m]$, one has that
\begin{multline*}
 \alpha(x)\int_{x-1/m}^{x+1/m}\frac{\varphi_m(x-y)}{\alpha(y)}\,
 dy\leq m\int _{x-1/m}^{x+1/m}\frac{|x-1/2|^\sigma}{|y-1/2|^\sigma}\,
 dy\\\leq c\, m^{1-\sigma}\bigl[
 (1/2-x+1/m)^{1-\sigma}+(x+1/m-1/2)^{1-\sigma}\bigr]\leq c<\infty.
\end{multline*}
If, on the other hand, $x\in[1/2+k/m,1/2,+(k+1)/m]$ (or similarly, if
$x$ belongs to the interval $[1/2-(k+1)/m,1/2-k/m]$), it holds that
\begin{align*}
 \alpha(x)\int_{x-1/m}^{x+1/m}\frac{\varphi_m(x-y)}{\alpha(y)}\,
 dy&\leq m\int_{x-1/m}^{x+1/m}\frac{\alpha(x)}{\alpha(y)}\, dy\leq
 c\, m|x-1/2|^\sigma \int_{x-1/m}^{x+1/m}\frac{1}{|y-1/2|^\sigma}\,
 dy\\&\leq c \bigl( \frac{k+2}{m}\bigr) ^\sigma \bigl(
 \frac{m}{k-1}\bigr) ^\sigma
 \leq c \bigl( \frac{k+2}{k-1}\bigr) ^\sigma\leq c<\infty,\:
 k\geq 2.
\end{align*}
Since there is no restriction on $k$, the estimate is valid for
any (large) $m$ and the proof is complete for $n=1$. 

As for $n=2$, since $\Gamma$ is assumed to be a smooth, closed curve,
it possesses a tubular neighborhood $T_\Lambda(\Gamma)$ with
coordinates $(y,\lambda)$ determined by
$$
 T_\Lambda(\Gamma)=\big\{ y+\lambda\nu _\Gamma(y)\,\big |\, y\in\Gamma,\:\lambda\in
 (-\Lambda,\Lambda)\big\},
$$
where $\nu_\Lambda$ is the unit outward normal to $\Gamma$. Then, for any $x\in
T_\Lambda(\Gamma)$, it is possible to find a unique pair
$\bigl(y(x),\lambda(x)\bigr)\in\Gamma\times(-\Lambda,\Lambda)$ such that 
$$
 x=y(x)+\lambda(x)\nu_\Gamma\bigl(y(x)\bigr).
$$
It follows that any integral with respect to the two-dimensional Lebesgue measure
$dxdy$ amounts to an integral in the new coordinates with respect to the measure
$d\sigma_{\Gamma_\lambda}(y)d\lambda$, where $\sigma_{\Gamma_\lambda}$
is the line measure along 
$$
 \Gamma_\lambda=\big\{ y+\lambda\nu _\Gamma(y)\,\big |\, y\in\Gamma\big\}
$$
for $\lambda\in(-\Lambda,\Lambda)$. Notice that
$$
 d\sigma _{\Gamma _\lambda}=|\dot \gamma _\lambda(t)|dt
$$
for any parametrization $\gamma _\lambda$ of $\Gamma_\lambda$. Denote
by $\gamma$ the arc-length parametrization of $\Gamma$,
then taking $\gamma_\lambda=\gamma+\lambda\nu_\Gamma(\gamma)$ yields a
parametrization of $\Gamma_\lambda$ and
$$
 \dot \gamma_\lambda=\dot\gamma_\Gamma +\lambda \frac{d}{dt}\nu
 _\Gamma(\gamma)=\bigl[ 1+\lambda\kappa(\gamma)\bigr]
 \tau_\Gamma(\gamma), 
$$
since
$\frac{d}{dt}\nu_\Gamma(\gamma)=\kappa(\gamma)\tau_\Gamma(\gamma)$ for
the curvature $\kappa$ along $\Gamma$. It follows that
$$
 \frac{1}{c}\leq |\dot \gamma _\lambda|=|1+\lambda\kappa(\gamma)|\leq
 c,\: \lambda\in[-\Lambda,\Lambda],
$$
for some $c>1$ and $\Lambda<<1$. Consequently one has that
\begin{equation}\label{measeq}
 \frac{1}{c}\,dxdy\leq d\sigma _\Gamma d\lambda\leq c\, dxdy.
\end{equation}
With this in hand, it follows that
\begin{align*}
 \int _{\mathbb{B}(x,1/m)}\varphi_m(x-y)\frac{\alpha(x)}{\alpha(y)}\,
 dy&\sim m^2\int _{\Gamma\cap \mathbb{B}(x,1/m)}\int
 _{\lambda(x)-1/m}^{\lambda(x)+1/m}\frac{\alpha \bigl( y(x),\lambda(x)\bigr)}{\alpha(\bar
   y,\bar \lambda)}d\bar \lambda d \sigma _{\Gamma}(\bar y)\\
 &\sim m\int _{\lambda(x)-1/m}^{\lambda(x)+1/m}\frac{|\lambda(x)|^\sigma}{|\bar \lambda|
   ^\sigma}\, d\bar \lambda, 
\end{align*}
and the proof can be completed in a manner similar to that used in the
one dimensional case by considering $x$ in distance-layers around
$\Gamma$. The assumption on $\alpha$ yielding $\delta>0$ such that 
$$
 \alpha(x)\sim d(x,\Gamma)^\sigma=|\lambda(x)|^\sigma \text{ in }T_{3
   \delta}(\Gamma)
$$
was of course used in the above estimates.
\end{proof}
\begin{lem}[Density]\label{density}
The space $\mathcal{D}_\pi(B)$ of periodic test-functions is dense in
$\operatorname{H}^1_{\pi,\alpha}(B)$. 
\end{lem}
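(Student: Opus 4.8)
The plan is to establish density by mollification, the crucial ingredient being the uniform bound just obtained in Lemma \ref{alphalemma}. Given $u\in\operatorname{H}^1_{\pi,\alpha}(B)$, I would set $u_m=\varphi_m*u$. Since $\varphi_m$ is smooth, compactly supported, and the convolution is periodic, each $u_m$ lies in $\mathcal{D}_\pi(B)=\operatorname{C}^\infty_\pi(B)$, so it suffices to show $u_m\to u$ and $\sqrt{\alpha}\,\nabla u_m\to\sqrt{\alpha}\,\nabla u$ in $\operatorname{L}^2_\pi(B)$, which together give convergence in the $\operatorname{H}^1_{\pi,\alpha}(B)$-norm.

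The convergence $u_m\to u$ in $\operatorname{L}^2_\pi(B)$ is the standard mollifier property. For the gradient term, I note that membership in $\operatorname{H}^1_{\pi,\alpha}(B)$ forces $\nabla u$ to be an integrable function, so differentiation commutes with convolution and $\nabla u_m=\varphi_m*\nabla u$. Writing $g:=\sqrt{\alpha}\,\nabla u\in\operatorname{L}^2_\pi(B)$ and using that $\alpha>0$ off $\Gamma$, one has $\nabla u=g/\sqrt{\alpha}$ and therefore
$$
 \sqrt{\alpha}\,\nabla u_m=\sqrt{\alpha}\,\bigl(\varphi_m*(g/\sqrt{\alpha})\bigr)=:T_m g.
$$
The task thus reduces to proving $T_m g\to g$ in $\operatorname{L}^2_\pi(B)$. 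The essential difficulty is that the weight $\alpha$ degenerates on $\Gamma$ and does not commute with the convolution; this is exactly where Lemma \ref{alphalemma} enters.

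The heart of the argument is a uniform $\operatorname{L}^2$-bound for $T_m$. Factoring $\varphi_m=\sqrt{\varphi_m}\,\sqrt{\varphi_m}$ and applying Cauchy-Schwarz under the integral gives
$$
 |T_m g(x)|^2\leq\Bigl(\alpha(x)\int\varphi_m(x-y)\tfrac{1}{\alpha(y)}\,dy\Bigr)\Bigl(\int\varphi_m(x-y)|g(y)|^2\,dy\Bigr).
$$
By Lemma \ref{alphalemma} the first factor is bounded by a constant $C$ independent of $m$ and $x$, whence $|T_m g|^2\leq C\,\varphi_m*|g|^2$ pointwise. Integrating over $B$ and using $\int_B\varphi_m=1$ with Fubini yields $\|T_m g\|_2^2\leq C\,\|g\|_2^2$, the desired uniform bound.

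Finally, I would verify $T_m g\to g$ on a dense subset and pass to the limit. For $g$ continuous and vanishing in a neighborhood of $\Gamma$, the quotient $g/\sqrt{\alpha}$ is continuous and, for $m$ large, $\varphi_m*(g/\sqrt{\alpha})$ is supported where $\alpha$ is continuous and bounded below, so standard mollifier estimates give $T_m g\to g$ uniformly, hence in $\operatorname{L}^2_\pi(B)$. Such $g$ are dense in $\operatorname{L}^2_\pi(B)$ because $\Gamma$ is closed of measure zero, and the uniform bound then upgrades the convergence to all of $\operatorname{L}^2_\pi(B)$, in particular to $g=\sqrt{\alpha}\,\nabla u$. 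The main obstacle is the uniform boundedness of $T_m$, resting entirely on Lemma \ref{alphalemma}; once that is in hand, the remaining density reduction is routine.
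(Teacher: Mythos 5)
Your proposal is correct and rests on exactly the same key estimate as the paper's proof: the Cauchy--Schwarz splitting $\varphi_m=\sqrt{\varphi_m}\,\sqrt{\varphi_m}$ combined with the uniform bound $\alpha\,\varphi_m*\frac{1}{\alpha}\in\operatorname{L}^\infty_\pi(B)$ of Lemma \ref{alphalemma}, which yields the pointwise domination $\alpha|\nabla u_m|^2\leq c\,\varphi_m*\bigl(\alpha|\nabla u|^2\bigr)$. Where you differ is in how this estimate is converted into norm convergence. The paper passes to an a.e.\ convergent (sub)sequence of $\nabla u_m$ and invokes the generalized Dominated Convergence Theorem with the dominating sequence $g_m=c\,\varphi_m*\bigl(\alpha|\partial_j u|^2\bigr)$, whose integrals converge because $\alpha|\partial_j u|^2\in\operatorname{L}^1_\pi(B)$. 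You instead package the estimate as a uniform $\operatorname{L}^2\to\operatorname{L}^2$ bound for the operators $T_mg=\sqrt{\alpha}\,\varphi_m*(g/\sqrt{\alpha})$ (well defined since $g/\sqrt{\alpha}\in\operatorname{L}^1_\pi(B)$ by $1/\alpha\in\operatorname{L}^1_\pi(B)$), prove $T_mg\to g$ on the dense class of continuous functions vanishing near $\Gamma$, and conclude by the standard three-epsilon argument. Both routes are valid; yours avoids extracting subsequences and isolates the functional-analytic content (equiboundedness plus convergence on a dense set), while the paper's is more self-contained in that it never needs to identify a dense subclass of $\operatorname{L}^2_\pi(B)$ adapted to the weight. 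Two small points worth making explicit in your write-up: the identity $\nabla u=g/\sqrt{\alpha}$ holds only almost everywhere (on $B\setminus\Gamma$), which suffices because $\Gamma$ is a null set; and the identification $T_m\bigl(\sqrt{\alpha}\,\nabla u\bigr)=\sqrt{\alpha}\,\nabla u_m$ should be recorded so that the convergence of $T_mg$ is really the convergence you need.
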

\begin{proof}
Let $u_m=\varphi_m*u$, so that $u_m\in \mathcal{D}_\pi(B)$, that
$$
 u_n\to u\,\text{ in }\operatorname{L}^2_\pi(B)\text{ as }m\to\infty.
$$
and that
$$
 \nabla u_m\to \nabla u\in \operatorname{L}^1_\pi(B)^n\text{ as
 }m\to\infty
$$
for any $u\in \operatorname{H}^1_{\pi,\alpha}(B)$. Without loss of generality,
it can be assumed that $\nabla u_m\to\nabla u$ pointwise almost
everywhere (otherwise just take the appropriate subsequence). Then
$$
 \alpha\,|\partial _j u_m|^2\longrightarrow\alpha\,|\partial _j u|^2\text{
   a.e. for }j=1,2\text{ as }m\to\infty.
$$
If it were possible to show that
\begin{equation}\label{esti}
 \alpha\, |\partial_ju_m|^2\leq g_m,\: m\in \mathbb{N},
\end{equation}
for nonnegative measurable functions $g_m$ which converge pointwise almost
everywhere to $g\in \operatorname{L}^1_\pi(B)$ and for which
$$
 \int_Bg_m\, dx \longrightarrow \int _Bg\, dx\text{ as }m\to\infty,
$$
then the generalized Dominated Convergence Theorem would imply that
$$
 \int _B \alpha\, |\partial_ju_m|^2\, dx \longrightarrow \int _B \alpha\,
 |\partial_ju|^2\, dx\text{ as }m\to\infty,
$$
which, together with the almost everywhere convergence, would yield
$$
 \sqrt{\alpha}\,\nabla u_m\to\sqrt{\alpha}\,\nabla u\text{ in
 }\operatorname{L}^2_\pi(B)^n\text{ as }m\to\infty, 
$$
and the claim. Going back to \eqref{esti}, Lemma \ref{alphalemma} gives
\begin{multline*}
 \alpha(x)\big |\int_B\varphi_m(x-y)\partial_ju(y)\, dy\big |^2\leq\int _B
 \varphi _m(x-y)\frac{\alpha(x)}{\alpha(y)}\, dy\int _B
 \varphi_m(x-y)\alpha(y) |\partial_j u(y)|^2\, dy\\
\leq c\,\varphi_m*\bigl( \alpha\, |\partial_j u|^2\bigr),\: j=1,2,\:
m\in \mathbb{N},
\end{multline*}
and $u\in \operatorname{H}^1_{\pi,\alpha}(B)$ ensures that
$$
 \varphi_m*\bigl(\alpha\, |\partial_ju|^2 \bigr)\longrightarrow \alpha\,
 |\partial_ju|^2\text{ in }\operatorname{L}^1_\pi(B)\text{ as }m\to\infty,
$$
as desired.
\end{proof}
\begin{lem}[Compact Embedding]\label{compactness}
The embedding $\operatorname{H}^1_{\pi,\alpha}(B)\hookrightarrow
\operatorname{L}^2_\pi(B)$ is compact. 
\end{lem}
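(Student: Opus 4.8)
The plan is to reduce compactness to two ingredients: ordinary Rellich--Kondrachov compactness on the region where $\alpha$ is nondegenerate, together with a quantitative estimate showing that the $\operatorname{L}^2$-mass carried inside a thin tube around $\Gamma$ is uniformly small. Concretely, let $(u_m)$ be a sequence with $\sup_m\|u_m\|_{\operatorname{H}^1_{\pi,\alpha}(B)}\le M$; the goal is to extract an $\operatorname{L}^2_\pi(B)$-convergent subsequence. First I would treat the region away from $\Gamma$: on $U_\rho:=B\setminus T_{\rho/2}(\Gamma)$ the hypothesis \eqref{alphahypo} gives $\alpha\ge c_\rho>0$, so $\int_{U_\rho}|\nabla u_m|^2\le c_\rho^{-1}\int_{U_\rho}\alpha|\nabla u_m|^2\le c_\rho^{-1}M^2$, whence $(u_m)$ is bounded in $\operatorname{H}^1(U_\rho)$ and, $U_\rho$ being Lipschitz, Rellich--Kondrachov yields an $\operatorname{L}^2(U_\rho)$-convergent subsequence. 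Taking $\rho=1/k$ and a diagonal subsequence, I obtain a (not relabeled) subsequence which is Cauchy in $\operatorname{L}^2\bigl(B\setminus T_\rho(\Gamma)\bigr)$ for every $\rho>0$.

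The hard part, and the only place the degeneracy is confronted, is the boundary-layer estimate
$$\int_{T_\rho(\Gamma)}|u|^2\,dx\le C\rho\,\|u\|_{\operatorname{H}^1_{\pi,\alpha}(B)}^2,\qquad 0<\rho\le\Lambda,$$
valid for all $u\in\operatorname{H}^1_{\pi,\alpha}(B)$ with $C$ independent of $u$ and $\rho$. To prove it I would pass to the tubular coordinates $(y,\lambda)$ and use the measure equivalence \eqref{measeq}, so that it suffices to bound $\int_\Gamma\int_{-\rho}^{\rho}|u(y,\lambda)|^2\,d\lambda\,d\sigma_\Gamma(y)$. For fixed $y$ and $\lambda\in(0,\rho)$, write for a reference level $\mu\in(\Lambda/2,\Lambda)$ the identity $u(y,\lambda)=u(y,\mu)-\int_\lambda^\mu\partial_s u(y,s)\,ds$; the weighted Cauchy--Schwarz inequality, together with the integrability of $s^{-\sigma}$ (here $\sigma<1$ is essential), gives $\bigl(\int_\lambda^\mu|\partial_s u|\,ds\bigr)^2\le \tfrac{\Lambda^{1-\sigma}}{1-\sigma}\int_0^\Lambda s^\sigma|\partial_s u(y,s)|^2\,ds$. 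Averaging the resulting pointwise bound over $\mu\in(\Lambda/2,\Lambda)$, integrating in $\lambda\in(0,\rho)$ and then over $y\in\Gamma$, and recalling that $s^\sigma\sim\alpha$ and $|\partial_s u|\le|\nabla u|$ in the tube, produces the explicit factor $\rho$ multiplying a bound by $\|u\|_{\operatorname{L}^2(B)}^2+\|\sqrt{\alpha}\,\nabla u\|_{\operatorname{L}^2(B)}^2$; the side $\lambda\in(-\rho,0)$ is symmetric. For $n=1$ this is the same computation with $\Gamma=\{\pm1/2\}$ and no integration in $y$.

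Finally I would combine the two ingredients. The boundary-layer estimate applied to $u_m$ gives $\int_{T_\rho(\Gamma)}|u_m|^2\le C\rho M^2$ uniformly in $m$, so for every $\rho>0$
$$\|u_m-u_{m'}\|_{\operatorname{L}^2(B)}^2\le \|u_m-u_{m'}\|_{\operatorname{L}^2(B\setminus T_\rho)}^2+2\!\int_{T_\rho}\!|u_m|^2+2\!\int_{T_\rho}\!|u_{m'}|^2\le \|u_m-u_{m'}\|_{\operatorname{L}^2(B\setminus T_\rho)}^2+4C\rho M^2.$$
Given $\varepsilon>0$, choosing $\rho$ so that $4C\rho M^2<\varepsilon/2$ and then $m,m'$ large, using the $\operatorname{L}^2(B\setminus T_\rho)$-Cauchy property from the diagonal step, makes the right-hand side smaller than $\varepsilon$. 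Hence $(u_m)$ is Cauchy, and therefore convergent, in $\operatorname{L}^2_\pi(B)$, which proves that the embedding is compact. I expect the boundary-layer estimate to be the main obstacle, as it is precisely where the vanishing of $\alpha$ on $\Gamma$ must be quantitatively tamed; the remaining steps are standard.
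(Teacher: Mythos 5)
Your proof is correct, but it takes a genuinely different route from the paper's. The paper disposes of the lemma in a few lines: since $\sigma<1$, assumption \eqref{alphahypo} gives $\alpha^{-p}\in\operatorname{L}^1(B)$ for some $p\in(1,1/\sigma)$, and a single H\"older inequality then shows $\operatorname{H}^1_{\pi,\alpha}(B)\hookrightarrow\operatorname{W}^{1,1+\delta}_\pi(B)$ for $\delta>0$ small with $\frac{1+\delta}{1-\delta}<p$; compactness follows from Rellich--Kondrachov because the Sobolev conjugate $(1+\delta)^*=\frac{n(1+\delta)}{n-1-\delta}$ exceeds $2$ when $n\leq 2$. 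You instead replace this global integrability trick by a decomposition into the nondegenerate region, where classical Rellich--Kondrachov applies directly, plus the quantitative boundary-layer estimate $\int_{T_\rho(\Gamma)}|u|^2\,dx\leq C\rho\,\|u\|^2_{\operatorname{H}^1_{\pi,\alpha}(B)}$, proved by a weighted Hardy-type computation in tubular coordinates; the factor $\int_0^\Lambda s^{-\sigma}\,ds<\infty$ is exactly where $\sigma<1$ enters. That computation is sound, and it is in fact the same mechanism the paper deploys later for the trace Lemma \ref{trace}; as there, you should either establish the estimate first for $u\in\mathcal{D}_\pi(B)$ and invoke the density Lemma \ref{density} (both sides of your inequality are continuous in the $\operatorname{H}^1_{\pi,\alpha}$-norm), or observe that elements of $\operatorname{H}^1_{\pi,\alpha}(B)$ are locally $\operatorname{H}^1$ off $\Gamma$ so the fundamental-theorem-of-calculus identity along normal lines is legitimate. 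What each approach buys: the paper's is much shorter but leans on the restriction $n\leq 2$ through the Sobolev exponent arithmetic; yours is longer but more robust, since the tube estimate uses only $\sigma<1$ and the codimension-one structure of $\Gamma$, and would survive in higher dimensions (or for $\sigma$ close to $1$ in dimension $n\geq 3$) where $(1+\delta)^*\leq 2$ can defeat the paper's argument.
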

\begin{proof}
In view of assumption \eqref{alphahypo} on the weight function
$\alpha$, an exponent $p>1$ can be found such that
$$
 \int _B \frac{1}{\alpha(x) ^p}\, dx<\infty.
$$
Then one has that $|\nabla u|\in \operatorname{L}^{1+\delta}_\pi(B)$ for some $\delta>0$
small enough since
\begin{multline*}
 \int _B |\nabla u(x)|^{1+\delta}\, dx\leq \int _B
 \bigl(\frac{\sqrt{\alpha(x)}}{\sqrt{\alpha(x)}}\bigr)^{1+\delta}|\nabla
  u(x)|^{1+\delta}\, dx\\\leq\bigl( \int _B \alpha(x)^{-\frac{1+\delta}{1-\delta}}\, dx
  \bigr)^{\frac{1-\delta}{2}}\bigl(\int _B \alpha(x)|\nabla u(x)|^2\, dx\bigr)
  ^{\frac{1+\delta}{2}}<\infty,
\end{multline*}
provided $\frac{1+\delta}{1-\delta}<p$, which is always possible for a small enough
$\delta$. This shows that $u\in \operatorname{W}^{1,1+\delta}_\pi(B)$
and the claim therefore follows from the compactness part of Sobolev
embedding theorem observing 
that $2<(1+\delta)^*=\frac{n(1+\delta)}{n-1-\delta}$ is valid as long as $n<
2\frac{1+\delta}{1-\delta}$. This is always the case for dimensions $n=1,2$.
\end{proof}
\begin{lem}[Existence of Traces]\label{trace}
Any function $u\in \operatorname{H}^1_{\pi,\alpha}(B)$ admits a trace $\gamma
_\Gamma(u)\in \operatorname{L}^2(\Gamma)$ on the degeneration set $\Gamma$.
\end{lem}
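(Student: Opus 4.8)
The plan is to establish the trace estimate
$$
\| \gamma_\Gamma(u)\|_{\operatorname{L}^2(\Gamma)} \leq C\, \| u\|_{\operatorname{H}^1_{\pi,\alpha}(B)}
$$
for smooth $u$ and then to extend the trace operator by density, invoking Lemma \ref{density}. First I would work in the tubular neighborhood $T_\Lambda(\Gamma)$ introduced in the proof of Lemma \ref{alphalemma}, using the normal coordinates $(y,\lambda)\in\Gamma\times(-\Lambda,\Lambda)$ and the measure equivalence \eqref{measeq}. For $u\in\mathcal{D}_\pi(B)$ the candidate trace is simply $\gamma_\Gamma(u)(y)=u(y,0)$, and the task reduces to controlling $\int_\Gamma |u(y,0)|^2\, d\sigma_\Gamma(y)$ by the weighted norm.

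Second, for fixed $y\in\Gamma$ I would represent the boundary value via the fundamental theorem of calculus along the normal fiber, averaging over the layer to avoid pointwise evaluation of $u$ in the interior:
$$
u(y,0) = \frac{1}{\Lambda}\int_0^\Lambda u(y,\lambda)\, d\lambda - \frac{1}{\Lambda}\int_0^\Lambda\int_0^\lambda \partial_\mu u(y,\mu)\, d\mu\, d\lambda.
$$
The first term is harmless: by Cauchy--Schwarz in $\lambda$ and the measure equivalence it is bounded in $\operatorname{L}^2(\Gamma)$ by $C\Lambda^{-1/2}\| u\|_{\operatorname{L}^2_\pi(B)}$. The second term is dominated by $\int_0^\Lambda |\partial_\mu u(y,\mu)|\, d\mu$, and it is here that the structure of the weight enters.

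Third --- and this is the crucial step --- I would split $|\partial_\mu u|=\sqrt{\alpha}\,|\partial_\mu u|\cdot\alpha^{-1/2}$ and apply Cauchy--Schwarz in $\mu$:
$$
\Bigl(\int_0^\Lambda |\partial_\mu u|\, d\mu\Bigr)^2 \leq \Bigl(\int_0^\Lambda \frac{1}{\alpha}\, d\mu\Bigr)\Bigl(\int_0^\Lambda \alpha\,|\partial_\mu u|^2\, d\mu\Bigr).
$$
The hypothesis \eqref{alphahypo} with $\sigma\in[0,1)$ is decisive, since $\alpha(y,\mu)^{-1}\leq c\,|\mu|^{-\sigma}$ is integrable in $\mu$ precisely because $\sigma<1$, yielding $\int_0^\Lambda \alpha^{-1}\, d\mu\leq c\,\Lambda^{1-\sigma}/(1-\sigma)<\infty$. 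Since $\partial_\mu u=\nabla u\cdot\nu_\Gamma$ gives $|\partial_\mu u|\leq|\nabla u|$, integrating the resulting bound over $\Gamma$ and again invoking \eqref{measeq} produces
$$
\int_\Gamma \Bigl(\int_0^\Lambda |\partial_\mu u|\, d\mu\Bigr)^2 d\sigma_\Gamma(y) \leq C\int_{T_\Lambda(\Gamma)} \alpha\,|\nabla u|^2\, dx \leq C\,\| \sqrt{\alpha}\,\nabla u\|_{\operatorname{L}^2_\pi(B)}^2.
$$

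Combining the two contributions gives the asserted trace inequality, and the bounded linear operator $\gamma_\Gamma$ extends from $\mathcal{D}_\pi(B)$ to all of $\operatorname{H}^1_{\pi,\alpha}(B)$ by Lemma \ref{density}. The same computation carried out on the layer $\lambda\in(-\Lambda,0)$ shows that the one-sided traces coincide, so that $\gamma_\Gamma(u)$ is single-valued --- consistent with the fact that $|\nabla u|\in\operatorname{L}^1_\pi(B)$ forces $u$ to be a genuine $\operatorname{W}^{1,1}$ function across $\Gamma$, in contrast with the larger space $\widetilde{\operatorname{H}}^1_{\pi,\alpha}(B)$, where $\chi_\Omega$ exhibits distinct inner and outer traces. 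I expect the main obstacle to be bookkeeping rather than analysis: one must check that the normal coordinates, the layer-averaging, and the measure comparison can be carried out uniformly along the closed curve $\Gamma$, whereas the quantitative heart of the estimate rests entirely on the integrability of $1/\alpha$ in the normal direction afforded by $\sigma<1$.
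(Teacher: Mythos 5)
Your proposal is correct and rests on exactly the same ingredients as the paper's proof: normal coordinates in the tubular neighborhood $T_\Lambda(\Gamma)$, the fundamental theorem of calculus along the normal fiber, the weighted Cauchy--Schwarz split $|\partial_\mu u|=\sqrt{\alpha}\,|\partial_\mu u|\cdot\alpha^{-1/2}$ with $\int_0^\Lambda\alpha^{-1}\,d\mu<\infty$ because $\sigma<1$, and the density of $\mathcal{D}_\pi(B)$ from Lemma \ref{density}. The only (cosmetic) difference is packaging: the paper exhibits the trace as the $\operatorname{L}^2(\Gamma)$-limit of a Cauchy sequence of traces on the parallel curves $\Gamma_{\lambda_m}$, whereas you derive an explicit bound $\|\gamma_\Gamma(u)\|_{\operatorname{L}^2(\Gamma)}\leq C\|u\|_{\operatorname{H}^1_{\pi,\alpha}(B)}$ by layer-averaging --- which, if anything, makes the final density-extension step more transparent.
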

\begin{proof}
Using the coordinates introduced in the proof of Lemma
\ref{alphalemma} for the tubular neighborhood $T_\Lambda(\Gamma)$ of
$\Gamma$, take $u\in \mathcal{D}_\pi(B)$ and
let $\seqm{\lambda}$ be a sequence in $(-\Lambda,\Lambda)\setminus\{
0\}$ such that $\lambda_m\to 0$ as $m\to\infty$. Then
$$
 u(y,\lambda _k)-u(y,\lambda _l)=\int _{\lambda _l}^{\lambda _k}\partial _\lambda
 u(y,\lambda)\, d\lambda.
$$
It follows that
\begin{multline*}
 \| u(\cdot,\lambda _k)-u(\cdot,\lambda _l)\|
 _{\operatorname{L}^2(\Gamma)}\leq\int 
 _\Gamma\big |\int _{\lambda _l}^{\lambda _k}\partial _\lambda
 u(y,\lambda)\, d\lambda\big |^2\, d\sigma _\Gamma(y)\\\leq\int
 _\Gamma\Bigl(\int _{\lambda _l}^{\lambda _k} 
 \frac{1}{\alpha(y,\lambda)}\, d\lambda\Bigr)\Bigl(\int _{\lambda
   _l}^{\lambda _k}\alpha(y,\lambda)| \partial _\lambda
 u(y,\lambda)|^2\, d\lambda\Bigr)\, d\sigma _\Gamma(y).
\end{multline*}
Noticing that 
$$
 d(x,\Gamma)=|\lambda(x)|\text{ for }x\in T_\Gamma(\Lambda),
$$
assumption \eqref{alphahypo} on the diffusivity $\alpha$ now implies that
$$
 \frac{1}{\alpha (y,\lambda)}\leq c\, \frac{1}{|\lambda| ^\sigma},\:
 (y,\lambda)\in\Gamma\times(-\Lambda,\Lambda),
$$
for a constant $c$ independent of $(y,\lambda)$ and thus
$$
 \int _{\lambda _l}^{\lambda _k} \frac{1}{\alpha (y,\lambda)}\,
 d\lambda\to 0\text{ as }k,l\to\infty.
$$
As for the remaining integral one has
$$
 \int _{\lambda _l}^{\lambda _k} \int _\Gamma \alpha(y,\lambda)
 | \partial _\lambda u(y,\lambda)|^2 \, d\sigma _\Gamma(y)
 d\lambda\leq c\,\int _{T_\Lambda} \alpha |\nabla u|^2\, dx dy\leq c\,\|
 u\| ^2_{\operatorname{H}^1_{\pi,\alpha}(B)},
$$
using \eqref{measeq} and that $\partial _\lambda u(y,\lambda)=\nabla
u(y,\lambda)\cdot\nu _\Gamma(y)\leq|\nabla u(y,\lambda)|$.
This shows that $\bigl(u(\cdot,\lambda _m)\bigr)_{m\in \mathbb{N}}$ is
a Cauchy sequence in $\operatorname{L}^2(\Gamma)$ and thus that there
exists a limit, which we denote by $\gamma_\Gamma(u)\in
\operatorname{L}^2(\Gamma)$, such that 
$$
 \gamma _{\Gamma _{\lambda _k}}(u)\to\gamma _\Gamma(u)\text{ as
 }k\to\infty. 
$$
Observe that the trace operators $\gamma _{\Gamma _{\lambda _k}}$ are
well-defined for any $k\in \mathbb{N}$ since $u\in
\operatorname{H}^1\bigl(B\setminus T_\varepsilon(\Gamma)\bigr)$ for any
$\varepsilon>0$ and $\Gamma_{\lambda_k}\subset B \setminus
T_\varepsilon(\Gamma)$ for $\varepsilon<<1$.
The construction of a trace for $u$ is therefore
completed in the smooth case. The rest follows by the density
established in Lemma \ref{density} 
\end{proof}
\begin{lem}[Poincar\'e Inequality]\label{poincare}
It holds that
$$
 \| u\| _{\operatorname{L}^2_\pi(B)}\leq c\, \|\alpha\nabla u\|
 _{\operatorname{L}^2_\pi(B)},\: u\in \operatorname{H}^1_{\pi,\alpha,0}(B) 
$$
where
$$
 \operatorname{H}^1_{\pi,\alpha,0}(B)=\big\{ u\in
 \operatorname{H}^1_{\pi,\alpha}(B)\, \big |\, \int _B u(x)\, dx=0\big\}. 
$$
\end{lem}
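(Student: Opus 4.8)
The plan is to establish the inequality by the standard compactness--contradiction argument, exploiting precisely the three lemmas just proved. Here the right-hand side is read as the natural seminorm $u\mapsto\|\sqrt{\alpha}\,\nabla u\|_{\operatorname{L}^2_\pi(B)}$ that defines $\operatorname{H}^1_{\pi,\alpha}(B)$. Suppose the claim fails. Then there is a sequence $(u_n)_{n\in\mathbb{N}}\subset \operatorname{H}^1_{\pi,\alpha,0}(B)$ normalized so that $\|u_n\|_{\operatorname{L}^2_\pi(B)}=1$ for every $n$ while $\|\sqrt{\alpha}\,\nabla u_n\|_{\operatorname{L}^2_\pi(B)}\to 0$ as $n\to\infty$. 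In particular the sequence is bounded in $\operatorname{H}^1_{\pi,\alpha}(B)$, since its norm equals $\bigl(1+\|\sqrt{\alpha}\,\nabla u_n\|_2^2\bigr)^{1/2}$.

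First I would pass to the limit. As $\operatorname{H}^1_{\pi,\alpha}(B)$ is a Hilbert space, this bounded sequence admits a subsequence $(u_{n_k})$ converging weakly to some $u\in \operatorname{H}^1_{\pi,\alpha}(B)$; by the compact embedding of Lemma~\ref{compactness} the same subsequence converges strongly in $\operatorname{L}^2_\pi(B)$, necessarily to the same limit $u$. Strong $\operatorname{L}^2$ convergence yields $\|u\|_{\operatorname{L}^2_\pi(B)}=1$ and, since the constant function is square integrable on $B$, also $\int_B u\,dx=\lim_k\int_B u_{n_k}\,dx=0$, so that $u\in\operatorname{H}^1_{\pi,\alpha,0}(B)$. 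Because $u\mapsto\|\sqrt{\alpha}\,\nabla u\|_{\operatorname{L}^2_\pi(B)}$ is a continuous seminorm on this Hilbert space, it is weakly lower semicontinuous, whence $\|\sqrt{\alpha}\,\nabla u\|_2\le\liminf_k\|\sqrt{\alpha}\,\nabla u_{n_k}\|_2=0$. As $\alpha>0$ on $B\setminus\Gamma$ and $\Gamma$ is a Lebesgue null set, this forces $\nabla u=0$ almost everywhere on $B$.

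The crux is then to deduce that $u$ is globally constant, and this is exactly the point at which membership in $\operatorname{H}^1_{\pi,\alpha}(B)$, rather than in the larger space $\widetilde{\operatorname{H}}^1_{\pi,\alpha}(B)$, is indispensable: in the latter the characteristic function $\chi_\Omega$ would furnish a nontrivial element with vanishing weighted gradient, so the inequality is genuinely false there. Here, however, the computation in the proof of Lemma~\ref{compactness} shows $u\in\operatorname{W}^{1,1+\delta}_\pi(B)$ for some $\delta>0$, i.e. $\nabla u$ is a genuine $\operatorname{L}^{1+\delta}$ function on the whole connected torus with no singular part along $\Gamma$. Hence $\nabla u=0$ a.e. forces $u$ to be constant globally, with no admissible jump across $\Gamma$; equivalently, the inner and outer traces furnished by Lemma~\ref{trace} must coincide. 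The zero-mean constraint $\int_B u\,dx=0$ then gives $u\equiv 0$, contradicting $\|u\|_2=1$.

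I expect the main obstacle to be precisely this last gluing step, namely ruling out nontrivial piecewise-constant limits. Everything else is soft functional analysis resting on Lemmas~\ref{density}--\ref{trace}, but the no-jump conclusion must be extracted from the integrability of the gradient, through the $\operatorname{W}^{1,1+\delta}$ embedding, rather than from $\sqrt{\alpha}\,\nabla u=0$ alone, which by itself remains compatible with a jump across $\Gamma$.
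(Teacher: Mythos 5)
Your proof is correct and follows the paper's overall skeleton: a normalization/contradiction argument combined with weak sequential compactness, the compact embedding of Lemma~\ref{compactness}, and weak lower semicontinuity of the seminorm, producing a limit $u$ with $\| u\|_{\operatorname{L}^2_\pi(B)}=1$, zero mean, and $\sqrt{\alpha}\,\nabla u=0$. Where you genuinely diverge is in the final ``no-jump'' step. The paper concludes only that $\nabla v_\infty=0$ almost everywhere on each connected component $\Omega_i$, $\Omega_o$ of $B\setminus\Gamma$, so that the limit is a priori piecewise constant, and then invokes the trace Lemma~\ref{trace} to force the two constants to agree. You instead observe that membership in $\operatorname{H}^1_{\pi,\alpha}(B)$ already means the distributional gradient is a regular $\operatorname{L}^1$ (indeed, by the computation in Lemma~\ref{compactness}, $\operatorname{L}^{1+\delta}$) function with no singular part concentrated on $\Gamma$, so $\nabla u=0$ a.e.\ makes the distributional gradient vanish identically and $u$ must be constant on the connected torus. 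This is a cleaner and more elementary route to the same conclusion: it uses nothing beyond the definition of the space, avoids the trace machinery entirely, and makes explicit why the inequality must fail on $\widetilde{\operatorname{H}}^1_{\pi,\alpha}(B)$, where $\chi_\Omega$ is a genuine counterexample. It also sidesteps the paper's intermediate upgrade from weak to strong convergence in $\operatorname{H}^1_{\pi,\alpha,0}(B)$, which is not actually needed; plain weak lower semicontinuity suffices, exactly as you use it. Both arguments are valid, and your reading of the right-hand side as $\|\sqrt{\alpha}\,\nabla u\|_{\operatorname{L}^2_\pi(B)}$ (rather than the $\|\alpha\nabla u\|$ appearing in the statement) is the one consistent with the paper's own proof and with the subsequent use of the lemma.
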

\begin{proof}
Towards a contradiction assume that the inequality does not hold, that
is, that there is a sequence $\seqk{u}$ in
$\operatorname{H}^1_{\pi,\alpha,0}(B)$ such that
$$
 \| u_k\| _{\operatorname{L}^2_\pi(B)}\geq k\|\sqrt{\alpha}\,\nabla u_k\|
 _{\operatorname{L}^2_\pi(B)}.
$$
Define $v_k=u_k/\| u_k\|_{\operatorname{L}^2_\pi(B)}$ so that
$$
 \| v_k\| _{\operatorname{L}^2_\pi(B)}=1\text{ and } \|\sqrt{\alpha}\,\nabla v_k\|
 _{\operatorname{L}^2_\pi(B)} =\frac{\|\sqrt{\alpha}\,\nabla u_k\|
   _{\operatorname{L}^2_\pi(B)}}{\| u_k\| _{\operatorname{L}^2_\pi(B)}} \leq 
 \frac{1}{k},\: k\in \mathbb{N}.
$$
In particular it holds that $\| v_k\|_{\operatorname{H}^1_{\pi,\alpha}(B)}\leq
c<\infty$ for $k\in \mathbb{N}$ and, by the weak sequential
compactness of bounded sets in Hilbert spaces, there
must be $v_\infty\in \operatorname{H}^1_{\pi,\alpha,0}(B)$ such that
$$
 v_k \rightharpoonup  v_\infty\text{ in
 }\operatorname{H}^1_{\pi,\alpha,0}(B)\text{ 
   along a subsequence}.
$$
The convergence of the norms then yields that  $v_k\to v_\infty$ in
$\operatorname{H}^1_{\pi,\alpha,0}(B)$ along the subsequence. In this
case $\nabla v_\infty=0$ almost everywhere in the two connected
components $\Omega_i$ and $\Omega_o$ of $B\setminus\Gamma$ since, by
weak lower semicontinuity, it holds that $\|\alpha\nabla v_\infty\|
_{\operatorname{L}^2_\pi(B)}\leq \liminf_{k\to\infty}\| \alpha\nabla
v_k\|_{\operatorname{L}^2_\pi(B)}$ along the subsequence. Thus 
$$
 v_\infty(x)=\begin{cases} c_i,&x\in \Omega_i\\ c_o,&x\in
   \Omega _o\end{cases} 
$$
and it can be inferred from Lemma \ref{trace} that necessarily
$c_i=c_o$ since otherwise $v_\infty$ would not possibly possess a
well-defined trace on $\Gamma$. The mean zero 
condition finally yields that the constant must be $0$. This clearly
contradicts the fact that $\| v_\infty\|
_{\operatorname{L}^2_\pi(B)}=1$ and concludes the proof. 
\end{proof}
The above lemma clearly implies that
$\|\sqrt{\alpha}\,\nabla\cdot\|_{_{\operatorname{L}^2_\pi(B)}}$ is an
equivalent norm on $\operatorname{H}^1_{\pi,\alpha}(B)$.
The Poincar\'e inequality implies that the restriction of the
nonnegative, continuous, and symmetric bilinear form \eqref{form} to
$\operatorname{H}^1_{\pi,\alpha,0}(B)\times
\operatorname{H}^1_{\pi,\alpha,0}(B)$ is elliptic and therefore
induces a self-adjoint linear operator
$$
 \mathcal{A}_{\alpha,0}:\operatorname{H}^1_{\pi,\alpha,0}(B)\to
 \operatorname{H}^1_{\pi,\alpha,0}(B)'=:\operatorname{H}^{-1}_{\pi,\alpha,0}(B),\:
 u\mapsto a_\alpha(u,\cdot), 
$$
such that
$$
 \mathcal{A}_{\alpha,0}:\operatorname{H}^1_{\pi,\alpha,0}(B)\to
 \operatorname{H}^{-1}_{\pi,\alpha,0}(B)
$$
is invertible and has, by Lemma \ref{compactness}, compact
resolvent. Here it holds that 
$$
 \operatorname{H}^{-1}_{\pi,\alpha,0}(B)=\big\{ u\in
 \operatorname{H}^1_{\pi,\alpha}(B)'\, \big |\, \langle
 u,\mathbf{1}\rangle=0\big\} 
$$
where $\mathbf{1}$ denotes the constant function with value $1$. It
follows that
$$
 \mathcal{A}_\alpha=\sum _{k=1}^\infty\mu _k(\cdot |\varphi _k)\varphi _k,
$$
for $(\mu _k,\varphi _k)$ eigenvalue/eigenvector pairs of $A_\alpha$ with
$$
 0<\mu _1\leq\mu_2\leq\dots\mu _k\to\infty\:(k\to\infty),
$$
and where $\frac{1}{\sqrt{2^n}}\mathbf{1}=:\varphi_0,\varphi
_1,\varphi _2,\dots$ is an orthonormal basis for
$\operatorname{H}^{-1}_{\pi,\alpha}(B)$. The
$\operatorname{L}^2_\pi(B)$-realization $A_\alpha$ of
$\mathcal{A}_\alpha$ will be particularly useful and is defined by
$A_\alpha u=\mathcal{A}_\alpha u$ for 
\begin{align}\label{domain}
 u\in \operatorname{dom}(A_\alpha)&=\Big\{ u\in
 \operatorname{H}^{1}_{\pi,\alpha}(B)\, \big |\,
 a_\alpha(u,\cdot)\text{ is }\operatorname{L}^2_\pi(B)
 \text{-continuous}\Big\} \\
 &=\Big\{ u\in \operatorname{H}^{1}_{\pi,\alpha}(B)\, \big |\,
 \operatorname{div}(\alpha\nabla u)\in
 \operatorname{L}^2_\pi(B)\Big\}=:\operatorname{H}^2_{\pi,\alpha}(B).
\end{align}
The second equality requires a proof. Assume that
$\operatorname{div}(\alpha \nabla u)\in\operatorname{L}^2_\pi(B)$, then
$$
 \int _B \alpha\underset{\in
   \operatorname{L}^2_{\pi,\alpha}(B)}{\underbrace{\nabla 
     u}}\cdot\underset{\in
   \operatorname{L}^2_{\pi,\alpha}(B)}{\underbrace{\nabla  
     v}}\,dx =-\int _B \underset{\in
   \operatorname{L}^2_\pi(B)}{\underbrace{\operatorname{div}(\alpha\nabla
     u)}}\underset{\in \operatorname{L}^2_\pi(B)}{\underbrace{v}}\, dx,\: v\in
 \mathcal{D}_\pi(B),
$$
and thus
$$
 |a_\alpha(u,v)|\leq\| \operatorname{div}(\alpha\nabla 
     u) \| _{\operatorname{L}^2_\pi(B)}\| v\|
 _{\operatorname{L}^2_\pi(B)},\: v\in \mathcal{D}_\pi(B).
$$
Conversely, if
$$
 \big |\int _B\alpha\nabla u\cdot\nabla v\, dx\big |\leq c\,\| v\|
 _{\operatorname{L}^2_\pi(B)},\: v\in \mathcal{D}_\pi(B),
$$
then there is $w\in \operatorname{L}^2_\pi(B)$ such that
$$
 \int _B\alpha\nabla u\cdot\nabla v\, dx=\int _B wv\, dx,\: v\in
 \mathcal{D}_\pi(B), 
$$
which entails that $\operatorname{div}(\alpha\nabla u)=-w\in
\operatorname{L}^2_\pi(B)$. Clearly
$A_\alpha:\operatorname{dom}(A_\alpha)\subset
\operatorname{L}^2_\pi(B)\to \operatorname{L}^2_\pi(B)$ is given by 
\begin{equation}\label{specrep}
 A_\alpha u=\sum _{k=1}^\infty\mu_k\,\underset{:=\hat
   u_k}{\underbrace{(u|\varphi _k)}}\,\varphi_k,\:
 u\in\operatorname{dom}(A_\alpha),
\end{equation}
and thus
$$
 e^{-tA_\alpha}u=\hat u_0+\sum _{k=1}^\infty e^{-\mu_kt}\hat
 u_k\varphi_k,\: u\in\operatorname{L}^2_\pi(B). 
$$
Notice that
\begin{gather*}
 \| u\| _{\operatorname{L}^2_\pi(B)}=\| \seqk{\hat u}\| _{l_2(\mathbb{N})}\text{ and
 }\\\| e^{-tA_\alpha}u\| _{\operatorname{L}^2_\pi(B)}=\| \bigl(e^{-\mu_kt}\hat
   u_k\bigr) _{k\in \mathbb{N}}\| _{l_2(\mathbb{N})}\leq \| \seqk{\hat
     u}\| _{l_2(\mathbb{N})}=\| u\| _{\operatorname{L}^2_\pi(B)}.
\end{gather*}
Thus $\big\{ T_1(t):=e^{-tA_\alpha}\, |\, t\geq 0\big\}$ is a contraction semigroup
and, since,
$$
 \| tA_\alpha e^{-tA_\alpha}u\| _{\operatorname{L}^2_\pi(B)}=\| \bigl(t\mu
 _ke^{-\mu_kt}\hat u_k\bigr) _{k\in \mathbb{N}}\|
 _{l_2(\mathbb{N})}\leq c\, \| u\| 
 _{\operatorname{L}^2_\pi(B)},\: t>0,
$$
it is also analytic (see \cite{Gol85}). Strong continuity can also be
easily derived via the spectral representation
\eqref{specrep}. Summarizing 
\begin{thm}\label{linex}
The operators $A_\alpha$ and $\mathcal{A}_\alpha$ generate strongly continuous
analytic contraction semigroups on $\operatorname{L}^2_\pi(B)$ and on
$\operatorname{H}^{-1}_{\pi,\alpha}(B)$, respectively. In particular, for any given $u_0\in
\operatorname{L}^2_\pi(B)\, \bigl[ \operatorname{H}^{-1}_{\pi,\alpha}(B)\bigr]$,
there is a unique solution $u\in \operatorname{C}\bigl(
[0,\infty),\operatorname{L}^2_\pi(B)\bigr)\:\bigl[ \operatorname{C}\bigl(
[0,\infty),\operatorname{H}^{-1}_{\pi,\alpha}(B)\bigr)\bigr]$ of the abstract Cauchy
problem  
$$
 \begin{cases} \dot u=A_\alpha u\text{ in } \operatorname{L}^2_\pi(B)\,\bigl[ \dot
   u=\mathcal{A}_\alpha u\text{ in }\operatorname{H}^{-1}_{\pi,\alpha}\bigr],&t>0,\\ 
 u(0)=u_0,\end{cases}
$$
satisfying
$$
 u\in\operatorname{C}^1\bigl( (0,\infty),\operatorname{L}^2_\pi(B)\bigr)\cap
 \operatorname{C}\bigl( (0,\infty),\operatorname{H}^2_{\pi,\alpha}(B)\bigr)\,\Bigl[ 
 \operatorname{C}^1\bigl((0,\infty),\operatorname{H}^{-1}_{\pi,\alpha}(B)\bigr)
 \cap\operatorname{C}\bigl((0,\infty),\operatorname{H}^{1}_{\pi,\alpha}(B)\bigr)
 \Bigr].
$$
Moreover, one always has that
$$
 u(t,u_0)\longrightarrow\frac{1}{2^n}\langle u_0,\mathbf{1}\rangle
 \text{ as }t\to\infty, 
$$
in $\operatorname{L}^2_\pi(B)$ $\bigl[\operatorname{H}^{-1}_{\pi,\alpha}(B)\bigr]$. 
\end{thm}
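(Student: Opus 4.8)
The statement is essentially a summary of the machinery assembled above, so the plan is to package it and supply the few remaining verifications. On $\operatorname{L}^2_\pi(B)$ the generation claim is already at hand: the spectral representation \eqref{specrep} together with Parseval's identity shows that $\{e^{-tA_\alpha}\}_{t\geq 0}$ is a contraction semigroup, the displayed bound $\|tA_\alpha e^{-tA_\alpha}u\|\leq c\,\|u\|$ is precisely the analyticity criterion, and strong continuity follows from the discrete dominated convergence theorem applied to
$$
 \|e^{-tA_\alpha}u-u\|^2_{\operatorname{L}^2_\pi(B)}=\sum_{k\geq 0}|e^{-\mu_kt}-1|^2\,|\hat u_k|^2,
$$
each summand tending to $0$ as $t\to 0^+$ while being dominated by the summable majorant $|\hat u_k|^2$. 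Hence $-A_\alpha$ generates an analytic $C_0$ contraction semigroup. I would then record that the identical computation, carried out in $\operatorname{H}^{-1}_{\pi,\alpha}(B)$ with respect to the orthonormal basis $(\varphi_k)_{k\geq 0}$ and the same eigenpairs $(\mu_k,\varphi_k)$, yields verbatim the analytic $C_0$ contraction semigroup generated by $-\mathcal{A}_\alpha$ there.

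Given generation, well-posedness and the asserted regularity are standard consequences of analytic semigroup theory. For $u_0\in\operatorname{L}^2_\pi(B)$ the orbit $u(t)=e^{-tA_\alpha}u_0$ is the unique solution of the abstract Cauchy problem in the stated class, uniqueness following from the generation property; the smoothing property of analytic semigroups then places $u(t)$ in $\operatorname{dom}(A_\alpha)=\operatorname{H}^2_{\pi,\alpha}(B)$ for every $t>0$ (using the identification \eqref{domain}) and makes $t\mapsto A_\alpha e^{-tA_\alpha}u_0$ continuous on $(0,\infty)$, yielding $u\in\operatorname{C}^1\bigl((0,\infty),\operatorname{L}^2_\pi(B)\bigr)\cap\operatorname{C}\bigl((0,\infty),\operatorname{H}^2_{\pi,\alpha}(B)\bigr)$ together with continuity up to $t=0$ in $\operatorname{L}^2_\pi(B)$. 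The bracketed $\operatorname{H}^{-1}_{\pi,\alpha}(B)$ assertion is obtained the same way with $\operatorname{dom}(\mathcal{A}_\alpha)=\operatorname{H}^1_{\pi,\alpha}(B)$.

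For the long-time limit I would exploit the spectral gap. Coercivity of $a_\alpha$ on the mean-zero subspace (Lemma \ref{poincare}) together with the compact embedding (Lemma \ref{compactness}) forces a discrete spectrum $0<\mu_1\leq\mu_2\leq\cdots$ with the constant mode $\varphi_0=\mathbf{1}/\sqrt{2^n}$ spanning the kernel, so that
$$
 \bigl\|e^{-tA_\alpha}u_0-\hat u_0\varphi_0\bigr\|^2_{\operatorname{L}^2_\pi(B)}=\sum_{k\geq 1}e^{-2\mu_kt}\,|\hat u_k|^2\leq e^{-2\mu_1t}\,\|u_0\|^2_{\operatorname{L}^2_\pi(B)},
$$
which tends to $0$ as $t\to\infty$; since $\hat u_0\varphi_0=(u_0|\varphi_0)\varphi_0=\tfrac{1}{2^n}\langle u_0,\mathbf{1}\rangle\,\mathbf{1}$ is exactly the claimed constant, the $\operatorname{L}^2_\pi(B)$ convergence follows (with exponential rate $\mu_1$), and the $\operatorname{H}^{-1}_{\pi,\alpha}(B)$ case is the same estimate measured in that norm.

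The step I expect to require the most care is the transfer to $\operatorname{H}^{-1}_{\pi,\alpha}(B)$: one must check that $\mathcal{A}_\alpha$ genuinely diagonalizes against a basis orthonormal for the $\operatorname{H}^{-1}_{\pi,\alpha}(B)$ inner product and keep careful track of the one-dimensional kernel of constants versus the mean-zero subspace on which coercivity and the gap live, since everything else is either already proved in the preceding lemmas or is a routine appeal to the theory of analytic semigroups generated by symmetric, coercive forms (the Lions form method).
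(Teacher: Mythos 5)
Your proposal is correct and follows essentially the same route as the paper: the theorem is stated there as a summary of the preceding development, which establishes generation via the spectral representation \eqref{specrep}, contraction and analyticity through the $l_2$ estimates $\| e^{-tA_\alpha}u\| \leq \| u\|$ and $\| tA_\alpha e^{-tA_\alpha}u\| \leq c\, \| u\|$, and the long-time limit from the spectral gap $\mu_1>0$ furnished by Lemma \ref{poincare} and Lemma \ref{compactness}. Your additional care about the diagonalization in $\operatorname{H}^{-1}_{\pi,\alpha}(B)$ and the explicit dominated-convergence argument for strong continuity are consistent with, and slightly more detailed than, what the paper records.
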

\begin{rem}
Depending on the functional setting chosen, the above theorem yields a strong or weak
solution of the initial boundary value problem
\begin{equation}\label{lwdpe}
\begin{cases}
 \dot u=\nabla\cdot\bigl(\alpha(x) \nabla u\bigr)&\text{ in }B\text{ for }t>0,\\
 u(0,\cdot)=u_0&\text{ in }B,
\end{cases}
\end{equation}
respectively.
\end{rem}
\begin{rem}
Notice how a piecewise constant initial datum is instanteneously
regularized in spite of the fact that it is a steady-state of the
equation. While the theorem ensures well-posedness in the specified
classes of functions, the existence of additional solutions is
observed also in numerical discretizations of the equation. More on
this in Section 4.
\end{rem}
\begin{rem}
While the semigroup is analytic, it does not follow that solutions are
$\operatorname{C}^\infty$. This is due to the fact that the
eigenfunctions are not smooth where the coefficient $\alpha$
vanishes. 
\end{rem}
\subsection{The Singular Case}
It was already noticed that $\widetilde{E}_\alpha$ has additional
minimizers as compared to $E_\alpha$, for which only constant
functions are minimizing. Let
$$
 {\bf h}(x):=
 \begin{cases}
   \frac{1}{\sqrt{2^n}}\frac{|B \setminus \Omega|^{1/2}}{|\Omega|^{1/2}},&
   x\in \Omega_i,\\
   -\frac{1}{\sqrt{2^n}}\frac{|\Omega|^{1/2}}{|B \setminus
     \Omega|^{1/2}},&x\in \Omega_o,
 \end{cases}
$$
with the understanding that $\Omega_i=\Omega$ for $n=2$,
$\Omega_i=[-1/2,1/2]$ for $n=1$, that $\Omega_o=B^n \setminus
\Omega$ for $n=1,2$, and that $|S|$ is the Lebesgue measure of the
measurable set $S$. Then ${\bf h}$ is a minimizer of $\widetilde{E}_\alpha$
and satisfies
$$
 \int _B {\bf h} (x)\, dx=0\text{ and }\int _B {\bf h}^2(x)\, dx=1.
$$
It is then possible to consider the modified energy functional
$$
 \widetilde{\mathcal{E}}_\alpha(u,c):=\int _B \alpha \, |\nabla u|^2\,
 dx,\: u\in \operatorname{H}^1_{\pi,\alpha}(B),\: c\in \mathbb{R},
$$
on the space $\operatorname{H}^1_{\pi,\alpha}(B)\oplus\mathbb{R}
{\bf h}\subset \operatorname{L}^2_\pi(B)$ and the
associated gradient flow
$$\begin{cases}
 \dot u=\nabla\cdot \bigl( \alpha \nabla u\bigr)&\\
 \dot c =0.&
 \end{cases}
$$
In this case, the solution $u$ to an initial value $u_0+c {\bf h}\in
\operatorname{H}^1_{\pi,\alpha}(B)\oplus\mathbb{R}{\bf h}$, 
would satisfy
$$
 u(t,\cdot)\longrightarrow \frac{1}{2^n}\int _B u_0(x)\, dx+c {\bf h},
$$
thus preserving the singular component during the entire evolution. While
this is a perfectly acceptable interpretation of equation
\eqref{lpwd}, it has some serious shortcomings. Most notably, the
natural semigroup associated to it and given by
$$T_2(t)=
\begin{bmatrix}
  e^{-t A_\alpha}&0\\0&1
\end{bmatrix}\text{ on }\operatorname{H}^1_{\pi,\alpha}(B)\oplus
\mathbb{R}\, {\bf h}
$$
cannot be reasonably extended to $\operatorname{L}^2_\pi(B)$ as it is
not $\operatorname{L}^2_\pi$-continuous as follows from
\begin{multline*}
 \| T_2(t)[{\bf h}_m-{\bf h}]\|_2=\| e^{-t A_\alpha}{\bf h}_m-{\bf h}\|_2=\|
 \sum_{k=1}^me^{-t \lambda _k}\hat {\bf h}_k\varphi_k-\sum _{k=1}^\infty\hat
 {\bf h}_k\varphi_k\|_2 \\\longrightarrow \bigl[  \sum_{k=1}^\infty
 (1-e^{-t\lambda_k})^{2}\hat {\bf h}_k^2 \bigr] ^{1/2}\neq 0\text{ for any
 }t>0\text{ as }m\to\infty,
\end{multline*}
where
$$
 \operatorname{H}^1_{\pi,\alpha}(B)\ni {\bf h}_m:=\sum _{k=1}^m\hat
 {\bf h}_k\varphi_k \longrightarrow {\bf h}\text{ in
 }\operatorname{L}^2_\pi(B)\text{ as }m\to\infty.
$$
\subsection{The Split Case}
Given the diffusion coefficient $\alpha\in
\operatorname{D}^\sigma_\pi$, one can consider the energy functional 
$$
 \overline{E}_\alpha(u^i,u^o)=\frac{1}{2}\int _{\Omega^i}\alpha |\nabla u^i|^2\,
 dx+\frac{1}{2}\int _{\Omega^o}\alpha |\nabla u^o|^2\, dx,\: (u^i,u^o)\in
 \operatorname{H}^1_\alpha(\Omega^i)\times
 \operatorname{H}^1_{\alpha,\pi}(\Omega^o),
$$
where $\Omega^i$ and $\Omega^o$ have previously been
defined. Arguments perfectly analogous to
those used in Section \ref{regcase} can be used to prove the following result.
\begin{thm}
The restriction of the functional $\overline{E}_\alpha$ to
$\operatorname{H}^1_{\alpha,0}(\Omega^i)\times
\operatorname{H}^1_{\alpha,\pi,0}(\Omega^o)\to \mathbb{R}$ is coercive
and the operator induced by $\overline{E}_\alpha$
$$
 \mathcal{A}_\alpha=\operatorname{diag}(\mathcal{A}_\alpha
 ^i,\mathcal{A}_\alpha ^o):
 \operatorname{H}^1_{\alpha}(\Omega^i)\times
 \operatorname{H}^1_{\alpha,\pi}(\Omega^o)\to
 \operatorname{H}^{-1}_{\alpha}(\Omega^i)\times
 \operatorname{H}^{-1}_{\alpha,\pi}(\Omega^o)
$$
and
$$
 A_\alpha=\operatorname{diag}(A_\alpha ^i,A_\alpha ^o):
 \operatorname{dom}(A_\alpha ^i)\times \operatorname{dom}(A_\alpha
 ^o)\to\operatorname{L}^2(\Omega^i)\times
 \operatorname{L}^2_\pi(\Omega^o) \hat{=}\operatorname{L}^2_\pi(B)
$$
with
$$
 \operatorname{dom}(A_\alpha ^l)=\big\{ u\in
 \operatorname{L}^2(\Omega^l)\, \big |\,\operatorname{div}\bigl( \alpha
 \nabla u \bigr) \in \operatorname{L}^2(\Omega^l)\big\},\: l=i,o.
$$
generate analytic contraction semigroups on
$\operatorname{H}^{-1}_{\alpha}(\Omega^i)\times 
 \operatorname{H}^{-1}_{\alpha,\pi}(\Omega^o)$ and on
 $\operatorname{L}^2_\pi(B)$, repectively. Call the latter
 $T_3(t)$. It follows that the system
\begin{equation}\label{lsmpme}
  \begin{cases}
    u^i_t=\nabla\cdot \bigl( \alpha \nabla u^i \bigr)&\text{ in
    }\Omega^i\text{ for }t>0,\\
    u^o_t=\nabla\cdot \bigl( \alpha \nabla u^o \bigr)&\text{ in
    }\Omega^o\text{ for }t>0,\\
   \lim _{x\to  \Gamma }\alpha(x) \partial _{\nu _\Gamma }u^i(x)=0&\\
   \lim _{x\to  \Gamma }\alpha(x) \partial _{\nu _\Gamma }u^o(x)=0&\\
    u^i(0,\cdot)=u^i_0&\text{ in }\Omega^i\\
    u^o(0,\cdot)=u^o_0&\text{ in }\Omega^o
  \end{cases}
\end{equation}
is uniquely (weakly) solvable for any $u_0\in
\operatorname{L}_\pi^2(B)$ (or, more in general, for an initial datum $u_0\in 
\operatorname{H}^{-1}_{\alpha}(\Omega^i)\times
 \operatorname{H}^{-1}_{\alpha,\pi}(\Omega^o)$), and the solution
 converges to a trivial steady-state in each subdomain, that is,
$$
 T_3(t)u_0 \longrightarrow \bigl(\frac{1}{|\Omega_i|}\int
 _{\Omega_i}u_0(x)\, dx \bigr)
 \chi_{\Omega_i}+\bigl(\frac{1}{|\Omega_o|}\int 
 _{\Omega_o}u_0(x)\, dx \bigr) \chi_{\Omega_o},
$$
for $u_0\in \operatorname{L}^2_\pi(\Omega)$.
\end{thm}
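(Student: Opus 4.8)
The plan is to run the entire program of Section \ref{regcase} \emph{separately} on each connected component $\Omega^l$, $l\in\{i,o\}$, of $B\setminus\Gamma$, now treating $\Gamma$ as a degeneracy boundary of each subdomain rather than as an interior locus. First I would record that $\operatorname{H}^1_\alpha(\Omega^i)$ and $\operatorname{H}^1_{\alpha,\pi}(\Omega^o)$ are Hilbert spaces under $\|u\|_2^2+\|\sqrt\alpha\,\nabla u\|_2^2$, and observe that the analogues of Lemma \ref{density} and Lemma \ref{compactness} transfer verbatim: the mollification bound of Lemma \ref{alphalemma} is purely local near $\Gamma$ and uses only the two-sided estimate \eqref{alphahypo}, so density of smooth functions and compactness of $\operatorname{H}^1_\alpha(\Omega^l)\hookrightarrow\operatorname{L}^2(\Omega^l)$ hold on each subdomain. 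The periodicity of $\Omega^o$ on the torus is handled exactly as in Section \ref{regcase}, while on the remaining boundary $\Gamma$ no condition is imposed at the level of the form.

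Next I would establish coercivity of $\overline{E}_\alpha$ on the mean-zero subspaces, and here the argument actually simplifies relative to the full-box case. Repeating the compactness-contradiction scheme of Lemma \ref{poincare} on a single \emph{connected} subdomain produces a weak limit $v_\infty$ with $\sqrt\alpha\,\nabla v_\infty=0$, hence $\nabla v_\infty=0$ almost everywhere and $v_\infty$ constant on $\Omega^l$; the mean-zero constraint then forces $v_\infty=0$ directly, with no appeal to the trace Lemma \ref{trace}. Thus $\|\sqrt\alpha\,\nabla\cdot\|_2$ is an equivalent norm on $\operatorname{H}^1_{\alpha,0}(\Omega^i)$ and $\operatorname{H}^1_{\alpha,\pi,0}(\Omega^o)$, each form $a_\alpha^l$ is elliptic, symmetric, and nonnegative, and, exactly as in Theorem \ref{linex}, each $\mathcal{A}_\alpha^l$ and its $\operatorname{L}^2$-realization $A_\alpha^l$ has compact resolvent with discrete spectrum $0<\mu_1^l\le\mu_2^l\le\cdots\to\infty$ on the orthogonal complement of the constants. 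The spectral calculus then yields strongly continuous analytic contraction semigroups on each factor, giving unique weak solvability of the abstract Cauchy problem there; taking the diagonal over $l\in\{i,o\}$ and using $\operatorname{L}^2_\pi(B)=\operatorname{L}^2(\Omega^i)\oplus\operatorname{L}^2_\pi(\Omega^o)$ assembles $T_3(t)=\operatorname{diag}(e^{-tA_\alpha^i},e^{-tA_\alpha^o})$.

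The genuinely new point, which I expect to be the main obstacle, is to verify that the domain characterization $\operatorname{dom}(A_\alpha^l)=\{u:\operatorname{div}(\alpha\nabla u)\in\operatorname{L}^2(\Omega^l)\}$ encodes precisely the degenerate no-flux condition $\lim_{x\to\Gamma}\alpha\,\partial_{\nu_\Gamma}u=0$, so that the two evolutions genuinely decouple. I would argue as in the second equality of \eqref{domain}: for $u$ in the form domain, integration by parts over $\Omega^l$ against a test function $v$ that need \emph{not} vanish near $\Gamma$ produces a boundary contribution $\int_\Gamma\alpha\,\partial_{\nu_\Gamma}u\,v\,d\sigma_\Gamma$, and the self-adjoint realization requires $a_\alpha^l(u,v)=-\int_{\Omega^l}\operatorname{div}(\alpha\nabla u)\,v\,dx$ for all such $v$, which forces that boundary term to vanish. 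Using the tubular coordinates of Lemma \ref{alphalemma} together with the degeneracy $\alpha\sim d(\cdot,\Gamma)^\sigma$ then yields the stated limit. The delicate part is attaching meaning to the flux $\alpha\,\partial_{\nu_\Gamma}u$ at a locus where $\alpha$ vanishes; the cleanest route is to read the condition off the weak formulation as the absence of any flux across $\Gamma$, rather than pointwise, so that the decoupling of the two problems is manifest.

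Finally, this no-flux condition gives conservation of mass on each subdomain: testing $\dot u^l=\operatorname{div}(\alpha\nabla u^l)$ against the constant shows $\frac{d}{dt}\int_{\Omega^l}u^l\,dx=0$. Since $0$ is a simple eigenvalue of $A_\alpha^l$ with constant eigenfunction and the remaining spectrum is bounded below by $\mu_1^l>0$, the spectral representation gives $e^{-tA_\alpha^l}u_0^l\to|\Omega^l|^{-1}\int_{\Omega^l}u_0\,dx$ in $\operatorname{L}^2(\Omega^l)$ as $t\to\infty$, with exponential rate on the complement of the constants. Reassembling over $l\in\{i,o\}$ produces the advertised piecewise-constant limit $\bigl(|\Omega_i|^{-1}\int_{\Omega_i}u_0\bigr)\chi_{\Omega_i}+\bigl(|\Omega_o|^{-1}\int_{\Omega_o}u_0\bigr)\chi_{\Omega_o}$, with two \emph{independent} averages — precisely the feature that allows this interpretation to preserve a jump across $\Gamma$ for all time.
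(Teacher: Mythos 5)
Your proposal is correct and takes essentially the same approach as the paper: the paper offers no separate argument for this theorem, stating only that ``arguments perfectly analogous to those used in Section \ref{regcase}'' apply, and running that program component by component on $\Omega^i$ and $\Omega^o$ is exactly what you do. Your added observations --- that the Poincar\'e/coercivity step actually simplifies on a single connected subdomain (no appeal to the trace lemma is needed to identify the limiting constant), and that the no-flux condition on $\Gamma$ is best read off weakly from the form domain --- are correct refinements of details the paper leaves implicit.
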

In this interpretation, one obtains an evolution on
$\operatorname{L}^2_\pi(B)$ for which an initial datum that is constant on each of
the domains $\Omega^l$, $l=i,o$, is a stationary solution and won't be
regularized nor evolved.
\begin{rem}
Taking the system point of view, it is possible to recover the
interpretation of Section 1 by defining the energy functional
$\overline{E}_\alpha$ on
$$
 \big\{ u=(u^i,u^o)\,\big |\, u\in \operatorname{H}^1_\alpha(\Omega^i)\times
 \operatorname{H}^1_{\alpha,\pi}(\Omega^o)\text{ and }\gamma _\Gamma
 (u^i)= \gamma_\Gamma (u^o)\big\}.
$$
This means that ``continuity'' across the interface has to be
explicitly enforced.
\end{rem}
\begin{rem}
Notice that the behavior of solutions in this interpretation
is possibily what one would like to see from an application to image
processing point of view in that solutions not only tend to become
piecewise constant but the constants are also the local averages of the
initial datum in the corresponding regions of constancy.
\end{rem}
\section{A Numerical Remark}
The non-uniqueness phenomenon highlighted above will be investigated
for a spatial semi-discretization of \eqref{lewd} in a one-dimensional
setting. The observation extends to the two-dimensional setting with
the appropriate modifications. It is observed from the outset that,
even the same numerical scheme, can produce two distinct
solutions. One is the discrete counterpart of the regularizing
solution of Section \ref{regcase}; the other ``feels'' the presence of
the singular solution $h$ but, interestingly, is not the compatible
with any of the three interpretations of equation \eqref{lpwd} given
above. An explanation of its origin will follow in the later part of
this section. 

Letting $n=1$ and $\alpha\in D^\sigma _\pi$ as in the previous
sections and choosing 
$$
 {\bf h}=\frac{1}{\sqrt{2}}\chi_{\Omega_i}-\frac{1}{\sqrt{2}}\chi_{\Omega_o},
$$
Theorem \ref{linex} yields a solution
$$
 u\in \operatorname{C}\bigl([0,\infty),\operatorname{L}^2_\pi(-1,1)
 \bigr)\cap\operatorname{C}^1\bigl((0,\infty),\operatorname{L}^2_\pi
 (-1,1)\bigr)\cap\operatorname{C}\bigl((0,\infty),
 \operatorname{H}^2_{\pi,\alpha}(-1,1)\bigr),
$$
for
\begin{equation}\label{1dl}
 \begin{cases}
 \dot u=\partial _x\bigl( \alpha(x)\partial _x u\bigr)&\text{ in
 }(-1,1)\text{ for }t>0,\\ u\text{ periodic},&\\
  u(0,\cdot)={\bf h},&\end{cases}
\end{equation}
where 
$$
 \operatorname{H}^2_{\pi,\alpha}(-1,1)=\big\{
 u\in\operatorname{H}^1_{\pi,\alpha}(-1,1)\, \big |\, \alpha u'\in 
 \operatorname{H}^1_\pi(-1,1)\big\},
$$
as follows from characterization \eqref{domain} of the previous
section. Theorem \ref{linex} then implies that
$$
 u(t,\cdot)=T_1(t) {\bf h}\to \frac{1}{2}\int _{-1}^1 {\bf h} (x)\,
 dx=0\text{ as }t\to\infty.
$$
It, however, also holds that
$$
 \alpha {\bf h} '=\alpha(\delta _{-1/2}-\delta _{1/2})=\alpha(-1/2) \delta
 _{-1/2}-\alpha(1/2) \delta _{1/2}=0, 
$$
so that $u(t,\cdot)\equiv u_0$ is a stationary solution of \eqref{1dl}. This
non-uniqueness is reflected at the numerical level. Indeed set
\begin{align*}
 x^m_i&=\frac{i}{m},\: i=-m,-m+1,\dots,m-1,m,\\d_m&=1/m,\\\alpha
  ^m_i&=\alpha(x^m_i). 
\end{align*}
Then
\begin{equation}\label{1ddl}
 u^m_t=\Delta ^{m,-}\bigl(\alpha ^m\Delta ^{m,+}(u^m)\bigr)
\end{equation}
is the gradient flow to the discrete energy functional given by
\begin{equation}\label{1dden}
 E^m_\alpha(u^m)=\frac{1}{2}\sum _{i=-m}^{m-1}[\alpha^m_i\Delta _i^{m,+}(u^m)]^2d_m
\end{equation}
where
\begin{align*}
 \Delta ^{m,+}_i(u^m)&=\frac{u^m_{i+1}-u^m_i}{d_m},\: i=-m,\dots,m-1,\\
  \Delta ^{m,-}_i(u^m)&=\frac{u^m_{i}-u^m_{i-1}}{d_m},\: i=-m,\dots,m-1,
\end{align*}
with the understanding that
$$
 u^m_{-m-1}=u^m_{m-1}\text{ and that }u^m_{m+1}=u^m_{-m+1},
$$
enforcing periodicity. The ordinary differential equation \eqref{1ddl}
is a spatial semi-discretization of \eqref{1dl}, and \eqref{1dden} is
one of the continuous energy functional \eqref{efctnal} on
$\operatorname{H}^1_{\pi,\alpha}(-1,1)$. This is seen by computing
\begin{multline*}
 \left.\frac{d}{d\epsilon}\right|_{\epsilon=0}E^m_\alpha(u^m+\epsilon\varphi
 ^m)=\sum _{i=-m}^{m-1}\alpha
 ^m_i\Delta_i^{m,+}(\varphi^m)\Delta_i^{m,+}(u^m)d_m\\=-\sum
 _{i=-m}^{m-1}\bigl[\alpha
 ^m_i\frac{u^m_{i+1}-u^m_i}{d_m}-\alpha^m_{i-1}\frac{u^m_i-u^m_{i-1}}{d_m}\bigr]\varphi 
 ^m_id_m= -\sum _{i=-m}^{m-1}\Delta ^{m,-}_i\bigl[\alpha ^m\Delta ^{m,+}(u^m)\bigr]
 \varphi ^m_id_m.  
\end{multline*}
Using test-vectors $\varphi ^m=\frac{1}{d_m}e^m_i$ where
$e^m_i\in\mathbb{R}^m$ is the $i$-th natural basis vector (which
satisfies $\varphi ^m_i\to\delta _x$ if $\frac{i}{m}\to x$ as
$m\to\infty$) yields 
$$
 \dot u^m=-\nabla E^m_\alpha(u^m)=\Delta ^{m,-}\bigl[\alpha ^m\Delta
 ^{m,+}(u^m)\bigr].  
$$
Notice that
\begin{multline*}
 2\frac{d}{dt}\operatorname{avg}(u^m)=\frac{d}{dt}\sum
  _{i=-m}^{m-1}u^m_id_m=\sum 
 _{i=-m}^{m-1}\dot u^m_id_m\\=\sum
 _{i=-m}^{m-1}\Delta _i^{m,-}\bigl(\alpha ^m\Delta ^{m,+}(u^m)d_m=-\sum
 _{i=-m}^{m-1}\alpha^m_i\Delta _i^{m,+}(u) \Delta _i^{m,+}(\mathbf{1})d_m=0
\end{multline*}
for $t\geq 0$. This shows that constant vectors are in the kernel
$\nabla E^m_\alpha$ and thus minimizers of $E^m_\alpha$.

When $m$ is odd, these are the only minimizers of zero energy since
$$
 \alpha ^m_i\geq \min_{j=-m,\dots,m-1}\alpha(x^m_j)\simeq
 (\frac{d_m}{2})^\sigma>0,
$$
and, consequently, $\Delta ^{m,+}(u^m)\equiv 0$ for any minimizer
$u^m$. Thus, for odd $m$, one has that
$$
 u^m(t)\to \frac{1}{2}\sum _{i=-m}^{m-1}u^m_0d_m=
 \operatorname{avg}(u_0^m)\text{ as } t\to\infty,
$$
if $u_0^m$ is the initial vector, just as for $T_1(t)u_0$ at the
continuous level. On the other hand, when $m$ is even,
vectors $H^m(c_1,c_2)$ defined by 
$$
 H^m(c_1,c_2)=\begin{cases} c_1,&-m/2<i\leq m/2,\\ c_2,&i>m/2
 \text{ and }i\leq -m/2\end{cases} 
$$
for any constants $c_1$ and $c_2$ also possess zero energy since
$\alpha ^m_{\pm m/2}=0$. In this case 
\begin{equation}\label{limit}
 u^m(t)\to\operatorname{avg}(u^m_0)+\frac{d_m}{2}[H^m(1,-1)\cdot
 u^m_0]H^m(1,-1) 
\end{equation}
This shows that two distinct solutions can be obtained numerically and that
$E^m_\alpha$ does not have a well-defined unique gradient flow
associated to it as $m\to\infty$ since the evolution clearly depends
on the parity of $m$. Notice also that the large time behavior of
$u^m$ is, for general initial data, incompatible with that of $T_2$
and $T_3$ as well, since
$$
 T_2(t)(v_0+c\,{\bf h})\longrightarrow \frac{1}{2}\int_{-1}^1v_0(x)\,
 dx+c\,{\bf h}\text{ as }t\to\infty,
$$
and
$$
 T_3(t)(v_0+c\,{\bf h})\longrightarrow \bigl( \int_{\Omega_i}v_0(x)\, dx+c
 \bigr) \chi_{\Omega_i}+\bigl( \int_{\Omega_o}v_0(x)\, dx-c
 \bigr) \chi_{\Omega_o}\text{ as }t\to\infty,
$$
if $u_0=v_0+c\,{\bf h}$. Limit \eqref{limit} is in general not the discrete
counterpart of any of these latter limits.\\ 
As it turns out, the behavior of the above discretization, is
compatible with the behavior of solutions of strongly degenerate
equations. To see that, assume that $\alpha$ is a periodic functions,
which is H\"older continuous of a positive exponent, positive
everywhere away from $\pm \frac{1}{2}$ and satisfies
$$
 \alpha (x)\sim|x\pm \frac{1}{2}|^{1+\sigma}\text{ as }x\simeq\mp
 \frac{1}{2},
$$
for some $\sigma>0$. It follows that $\frac{1}{\alpha}\notin
\operatorname{L}^1_\pi$ and that \eqref{lewd} is strongly
degenerate. Define again
$$
 \operatorname{H}^1_{\pi,\alpha}(B)=\big\{ u\in
 \operatorname{L}^2_\pi(B)\, \big |\, \sqrt{\alpha}\, u'\in\operatorname{L}^2_\pi(B)\big\},
$$
and notice, that now, not only ${\bf h}\in \operatorname{H}^1_{\pi,\alpha}(B)$ but also that
$$
 \int _{-1}^1 \alpha\big |\bigl(\varphi_m*{\bf h}\bigr)'(x)\big |^2\, dx\to
 \|\sqrt{\alpha}\, {\bf h} '\|_2^2=0\text{ as }m\to\infty.
$$
This shows that, in the strongly degenerate case, the space
$$
 \big\{ u\in \operatorname{L}^2_\pi(B)\, \big |\, u'\in
 \operatorname{L}^1_\pi(B)\text{ and }\sqrt{\alpha}\,
 u'\in\operatorname{L}^2_\pi(B)\big\}
$$
is not closed and can therefore not be viewed as the ``natural''
domain of the energy functional $E_\alpha$ as in the weakly degenerate
case. Let
$$
 \operatorname{H}^1_{\pi,\alpha,0}(B)=\big\{ u\in
 \operatorname{H}^1_{\pi,\alpha}(B)\, \big |\, \langle
 u,\mathbf{1}\rangle =0,\: \langle u,{\bf h} \rangle =0\big\}
$$
\begin{lem}\label{sdcoercive}
It holds that
$$ 
 \int _{-1}^1 |u(x)|^2\, dx\leq c\, \int _{-1}^1 \alpha(x)|u'(x)|^2\,
 dx\text{ for }u\in \operatorname{H}^1_{\pi,\alpha,0}(B).
$$
\end{lem}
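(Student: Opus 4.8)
The plan is to argue by contradiction, in close analogy with the proof of Lemma~\ref{poincare}, but with the trace argument there replaced by an exploitation of the two orthogonality constraints. The first step is to unwind those constraints: since ${\bf h}=\frac{1}{\sqrt2}\chi_{\Omega_i}-\frac{1}{\sqrt2}\chi_{\Omega_o}$, the conditions $\langle u,\mathbf1\rangle=0$ and $\langle u,{\bf h}\rangle=0$ together are equivalent to
$$
 \int_{\Omega_i}u\,dx=\int_{\Omega_o}u\,dx=0,
$$
that is, to $u$ having vanishing mean on \emph{each} connected component of $B\setminus\Gamma$. This decouples the two components, so that it suffices to prove a weighted Poincar\'e inequality for mean-zero functions on a single interval whose weight vanishes like $t^{1+\sigma}$ at the endpoints.

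First I would suppose the inequality fails and produce a sequence $\seqk{v}$ in $\operatorname{H}^1_{\pi,\alpha,0}(B)$ with $\|v_k\|_{\operatorname{L}^2_\pi(B)}=1$ and $\|\sqrt{\alpha}\,v_k'\|_{\operatorname{L}^2_\pi(B)}\to0$, exactly as in Lemma~\ref{poincare}. The sequence is bounded in $\operatorname{H}^1_{\pi,\alpha}(B)$, so a subsequence converges weakly to some $v_\infty$, and by weak lower semicontinuity $\|\sqrt{\alpha}\,v_\infty'\|_2=0$, forcing $v_\infty=c_i\chi_{\Omega_i}+c_o\chi_{\Omega_o}$. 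The decisive difference from the weakly degenerate case is that Lemma~\ref{trace} is no longer available: in the strongly degenerate regime functions genuinely may jump across $\Gamma$ (indeed ${\bf h}$ does), so continuity cannot be invoked and $c_i,c_o$ need not agree. Instead the mean-zero conditions pass to the limit and give $c_i=c_o=0$, whence $v_\infty=0$, contradicting $\|v_\infty\|_2=1$, provided the convergence is strong in $\operatorname{L}^2_\pi(B)$.

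The main obstacle is precisely this last proviso, i.e.\ the compactness of the embedding $\operatorname{H}^1_{\pi,\alpha}(B)\hookrightarrow\operatorname{L}^2_\pi(B)$, because the proof of Lemma~\ref{compactness} used $\alpha^{-p}\in\operatorname{L}^1_\pi(B)$ for some $p>1$, which now \emph{fails} since $\tfrac1\alpha\notin\operatorname{L}^1_\pi$. I would recover compactness by a localized Hardy-type estimate. Away from $\Gamma$ the weight is bounded below, so on a fixed $B\setminus T_\delta(\Gamma)$ one has a uniform $\operatorname{H}^1$ bound, hence $\operatorname{L}^2$-precompactness by Rellich, and in particular a uniform bound $|u_k(\delta/2)|\le C_\delta$ at a fixed interior point. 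Near an endpoint, writing $u$ in the coordinate $t=d(\cdot,\Gamma)$ as $u(t)=u(\delta/2)-\int_t^{\delta/2}u'$ and applying Cauchy--Schwarz to the splitting $u'=\alpha^{-1/2}\cdot\alpha^{1/2}u'$ yields
$$
 \int_0^{\delta'}|u(t)|^2\,dt\le 2C_\delta^2\,\delta'+c\,\|\sqrt{\alpha}\,u'\|_2^2\int_0^{\delta'}\Big(\int_t^{\delta/2}\tfrac1\alpha\,dr\Big)\,dt;
$$
since $\int_t^{\delta/2}\alpha^{-1}\,dr\sim t^{-\sigma}$, the remaining double integral is $O\big((\delta')^{1-\sigma}\big)$. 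Both terms tend to $0$ as $\delta'\to0$, uniformly in $k$, which is the uniform smallness of tails needed to upgrade interior precompactness to precompactness on all of $B$.

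Finally I would remark that this tail estimate converges \emph{precisely} because the degeneration is subcritical, $1+\sigma<2$; the exponent $2$ is the sharp threshold. Indeed the concentrating family $u_\epsilon(x)=\epsilon^{-1/2}\eta\big((\tfrac12-x)/\epsilon\big)$, with $\eta\in\mathcal{D}\big((1,2)\big)$ and corrected by a constant to have mean zero on each component, satisfies $\|u_\epsilon\|_2\sim1$ while $\|\sqrt\alpha\,u_\epsilon'\|_2^2\sim\epsilon^{\sigma-1}$, so that both compactness and the asserted inequality break down once $\sigma>1$ (with $\sigma=1$ critical). The natural hypothesis for the lemma is therefore $\sigma\in(0,1)$: a degeneration strong enough that $\tfrac1\alpha\notin\operatorname{L}^1_\pi$, yet mild enough to retain compactness and with it the Poincar\'e inequality.
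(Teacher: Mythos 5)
Your proposal is correct and shares the paper's overall skeleton (normalize a contradicting sequence, extract a limit, show the limit lies in $\operatorname{span}\{\mathbf{1},{\bf h}\}$, kill it with the two orthogonality constraints), but it deviates in two substantive ways. First, where you decouple the constraints into mean-zero conditions on $\Omega_i$ and $\Omega_o$ and read off that the piecewise-constant limit vanishes, the paper instead classifies the distributional derivative of the limit: it shows $\operatorname{supp}(u')\subset\{-\tfrac12,\tfrac12\}$, bounds the order of $u'$ by $1$ via $|\langle u',\varphi\rangle|\le\|u\|_2\|\varphi'\|_\infty$, writes $u'=A\delta_{-1/2}+B\delta_{1/2}+C\delta'_{-1/2}+D\delta'_{1/2}$, and eliminates $C,D$ (since $u\in\operatorname{L}^2$) and $A+B$ (pairing with $\mathbf{1}$) to conclude $u=\tilde A+\tilde B\,{\bf h}$. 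The two derivations are equivalent; yours is shorter. Second, and more importantly, you supply a step the paper omits: the final contradiction needs $\|v_\infty\|_2=1$, i.e.\ \emph{strong} $\operatorname{L}^2$ convergence of the normalized sequence, and Lemma \ref{compactness} is unavailable here because $\tfrac1\alpha\notin\operatorname{L}^1_\pi$. The paper passes over this silently; your interior-Rellich-plus-Hardy-tail argument closes the gap, and the tail integral $\int_0^{\delta'}\bigl(\int_t^{\delta/2}\alpha^{-1}\,dr\bigr)dt\sim(\delta')^{1-\sigma}$ converges exactly when $\sigma<1$. Your concentration example $u_\epsilon=\epsilon^{-1/2}\eta\bigl((\tfrac12-x)/\epsilon\bigr)$ with $\|\sqrt{\alpha}\,u_\epsilon'\|_2^2\sim\epsilon^{\sigma-1}$ then shows this is not an artifact of your method: the inequality genuinely fails for $\sigma>1$, so the lemma as stated in the paper (for arbitrary $\sigma>0$) needs the additional hypothesis $\sigma\in(0,1)$, which the paper's own proof also implicitly requires at the unjustified strong-convergence step. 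This is a legitimate correction, not a defect of your argument.
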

\begin{proof}
Assume that this is not the case. Then a sequence $\seqk{u}$ in
$\operatorname{H}^1_{\pi,\alpha,0}(B)$ can be found such that
$$
 1=\| u_k\| _2^2\geq k\|\sqrt{\alpha}\, u_k'\|_2^2,\: k\in \mathbb{N}.
$$
It follows that $\sqrt{\alpha}u_k' \longrightarrow 0$ in
$\operatorname{L}^2_\pi(B)$. Now let $B_\varepsilon=[|x\pm \frac{1}{2}|\geq
\varepsilon]$ for small $\varepsilon>0$. Then
$$
 \| \chi _{B_\varepsilon}u_k'\|_2^2\leq\, c\| \sqrt{\alpha}\, 
 u'\|_2^2\leq\, \frac{c}{k},\: k\in\mathbb{N}.
$$
It follows that $\chi_{B_\varepsilon}u_k\to 0$ as $k\to\infty$ (along
a subsequence) in $\operatorname{H}^1_{\pi}(B_\varepsilon)$ and therefore
that
$$
 \chi_{B_\varepsilon}u'=0\text{ a.e.}
$$
for any small $\varepsilon>0$. It follows that $u$ must be constant on
$\Omega_i$ and on $\Omega_o$ and, consequently, that
$$
 \operatorname{supp}(u')\subset\{-1/2,1/2\}.
$$
Since $u\in \operatorname{L}^2_\pi(B)$, the distributional derivative
$u'$ has at most order 1 as follows from
$$
 |\langle u',\varphi \rangle |=|\langle u,\varphi '\rangle |\leq \|
 u\| _2\|\varphi '\|_2\leq\, c\| u\| _2\| \varphi '\|_{\infty},\:
 \varphi\in \mathcal{D} _\pi(B).
$$
Combining this with the support condition above, it is concluded that
$$
 u'=A \delta_{-1/2}+B \delta_{1/2}+C \delta'_{-1/2}+D \delta '_{1/2},
$$
for some constants $A,B,C,D$. Since $u\in \operatorname{L}^2_\pi(B)$,
it must then hold that $C=D=0$. One also has that
$$
 A+B=\langle u',\mathbf{1}\rangle = -\langle u,\mathbf{0} \rangle=0,
$$
and consequently that $u=\tilde A+\tilde B {\bf h}$ for some constants
$\tilde A$ and $\tilde B$, which
must both vanish since $u\in
\operatorname{H}^1_{\pi,\alpha,0}(B)$. This clearly yields a
contradiction to $\| u\|_2=1$.
\end{proof}
The form
$$
 a_\alpha(u,v)=\frac{1}{2}\int _{-1}^1 (\sqrt{\alpha}\, u')(x)
 (\sqrt{\alpha}\, v')(x)\, dx
$$
defined on $\operatorname{H}^1_{\pi,\alpha,0}(B)\times
\operatorname{H}^1_{\pi,\alpha,0}(B)$ is therefore coercive and the
associated operator $\mathcal{A}_\alpha$ invertible. The solution $u$
of the corresponding heat equation \eqref{lpwd} with initial datum
$u_0$ therefore satisfies 
$$
 u(t,u_0)\longrightarrow \frac{1}{2}\langle u_0,\mathbf{1}\rangle
 +\langle u_0,{\bf h} \rangle {\bf h}\text{ as }t\to\infty,
$$
just as the numerical solution when $m$ is even. It can be concluded
that, at the discrete level, the distinction between weakly and
strongly degenerate equations can go lost in certain cases.
\begin{rem}
Observe that it is more likely (especially in higher
dimensions) that a numerical scheme will deliver the ``smooth''
solution of the continuous equation rather than the stationary one
(for piecewise constant initial data). This is due to the fact that
latter solution can only be captured if the jumps are on (or close
enough) to the grid and Dirac delta functions at the jump locations
discretize to discrete delta functions (read natural basis
vectors). This is the case in the above example when $m$ is even but
could not hold, e.g.,  for a centered difference scheme based on 
discretizing the first derivative by
$$
 \Delta ^{m,c}_i(u^m)=\frac{u^m_{i+1}-u^m_{i-1}}{2h_n},\:
 i=-m,\dots,m-1, 
$$
i.e. for 
$$
\widetilde E^m_\alpha(u^m)=\frac{1}{2}\sum
_{i=-m}^{m-1}\alpha_i^m[\frac{u^m_{i+1}-u^m_{i-1}}{2h_n}]^2h_n. 
$$
That said, the above example is not pathological. Indeed spectral discretizations in
combination with appropriate discrete quadrature rules for the
discretization of integrals (duality pairings) also capture the
``singular'' rather than the regular solution. This follows again from
the fact that continuous delta functions discretize to discrete delta
functions as is proved in \cite{G062}. 
\end{rem}
\section{Regularization}
Next it is shown that the regularizing interpretation of
\eqref{lpwd} can be view as the limit of the regularized problem
\begin{equation}\label{dlmpme}\begin{cases}
  \dot u=\nabla\cdot\bigl([1/m+\alpha]\nabla u\bigr) &\text{in
  }B\text{ for }t>0,\\u(0)=u_0&\text{in }B.
\end{cases}
\end{equation}
as $m\to\infty$. Start with the regularized energy functional
\begin{equation}\label{denergy}
 E^m _\alpha(u):=\begin{cases}
  \int _B [1/m+\alpha]\,|\nabla u|^2\, dx,& u\in
  \operatorname{H}^1_{\pi}(B),\\
 \infty,&u\in \operatorname{L}^2_\pi(B)\setminus
 \operatorname{H}^1_{\pi}(B).
\end{cases}
\end{equation}
\begin{prop}\label{gamma}
It holds that $E^{m}_\alpha\overset{\Gamma }{\longrightarrow}E_\alpha$
(where $E_\alpha$ is extended by $\infty$ to $\operatorname{L}^2_\pi(B)\setminus
\operatorname{H}^1_{\pi,\alpha}(B)$) as 
$m\to\infty$ with respect to the weak topology of
$\operatorname{H}^1_{\pi,\alpha}(B)$. 
\end{prop}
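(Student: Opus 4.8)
The plan is to verify the two defining conditions of $\Gamma$-convergence (in the sequential sense, the weak topology of $\operatorname{H}^1_{\pi,\alpha}(B)$ being metrizable on the bounded sets that matter here): the \emph{liminf inequality}, stating that $E_\alpha(u)\le\liminf_{m\to\infty}E^m_\alpha(u_m)$ whenever $u_m\rightharpoonup u$, and the existence of a \emph{recovery sequence}, namely a sequence $u_m\rightharpoonup u$ with $\limsup_{m\to\infty}E^m_\alpha(u_m)\le E_\alpha(u)$. The starting observation is that, on $\operatorname{H}^1_\pi(B)$, the functionals decompose as
$$
 E^m_\alpha(u)=\tfrac1m\|\nabla u\|_{\operatorname{L}^2_\pi(B)}^2+E_\alpha(u),
$$
so that $E^m_\alpha\ge E_\alpha$ pointwise and the two differ only by the vanishing, non-degenerate term $\tfrac1m\|\nabla u\|_2^2$.

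For the liminf inequality, I would first pass to a subsequence realizing $\liminf_m E^m_\alpha(u_m)=:L$; if $L=\infty$ there is nothing to prove, and otherwise one may assume $E^m_\alpha(u_m)<\infty$, hence $u_m\in\operatorname{H}^1_\pi(B)$, along that subsequence. The key point is that weak convergence $u_m\rightharpoonup u$ in $\operatorname{H}^1_{\pi,\alpha}(B)$ forces $\sqrt{\alpha}\,\nabla u_m\rightharpoonup\sqrt{\alpha}\,\nabla u$ in $\operatorname{L}^2_\pi(B)^n$: indeed, for every fixed $g\in\operatorname{L}^2_\pi(B)^n$ the map $v\mapsto(\sqrt{\alpha}\,\nabla v,g)_{\operatorname{L}^2_\pi(B)}$ is a bounded linear functional on $\operatorname{H}^1_{\pi,\alpha}(B)$. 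Weak lower semicontinuity of the $\operatorname{L}^2$-norm then yields
$$
 E_\alpha(u)=\|\sqrt{\alpha}\,\nabla u\|_2^2\le\liminf_m\|\sqrt{\alpha}\,\nabla u_m\|_2^2=\liminf_m E_\alpha(u_m)\le\liminf_m E^m_\alpha(u_m),
$$
the last step using $E_\alpha\le E^m_\alpha$.

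The recovery sequence is where the real work lies. Fix $u\in\operatorname{H}^1_{\pi,\alpha}(B)$, where $E_\alpha(u)<\infty$, and set $u_k=\varphi_k*u\in\mathcal{D}_\pi(B)\subset\operatorname{H}^1_\pi(B)$. By the Density Lemma \ref{density} these satisfy $u_k\to u$ in $\operatorname{H}^1_{\pi,\alpha}(B)$, in particular $E_\alpha(u_k)\to E_\alpha(u)$. One cannot simply take $u_m=u$, because a function in $\operatorname{H}^1_{\pi,\alpha}(B)$ need not lie in $\operatorname{H}^1_\pi(B)$, so $E^m_\alpha(u)$ may be $+\infty$; this is the genuine obstacle. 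The mollifications repair this, but their Dirichlet energies $C_k:=\|\nabla u_k\|_{\operatorname{L}^2_\pi(B)}^2$, though finite for each $k$, may blow up as $k\to\infty$. The plan is therefore a diagonal argument: since $E^m_\alpha(u_k)=\tfrac1m C_k+E_\alpha(u_k)$ and, for each fixed $k$, $\tfrac1m C_k\to0$, one selects integers $M_1<M_2<\cdots$ with $\tfrac1m C_j<\tfrac1j$ for $m\ge M_j$ and sets $k(m)=j$ for $M_j\le m<M_{j+1}$. Then $k(m)\to\infty$, so $u_m:=u_{k(m)}\to u$ strongly (hence weakly) in $\operatorname{H}^1_{\pi,\alpha}(B)$ and $E_\alpha(u_{k(m)})\to E_\alpha(u)$, while $\tfrac1m C_{k(m)}<\tfrac1{k(m)}\to0$. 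Consequently
$$
 \limsup_m E^m_\alpha(u_m)=\limsup_m\Bigl(\tfrac1m C_{k(m)}+E_\alpha(u_{k(m)})\Bigr)=E_\alpha(u),
$$
which, combined with the liminf inequality, establishes the $\Gamma$-limit. I expect the balancing of the blow-up rate of $C_k$ against the vanishing weight $1/m$ to be the only delicate point; everything else is standard weak lower semicontinuity together with the already-established density.
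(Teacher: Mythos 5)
Your proof is correct, and for the recovery sequence it takes a genuinely different route from the paper. Both arguments handle the liminf inequality the same way (the pointwise bound $E_\alpha\le E^m_\alpha$ plus weak lower semicontinuity of $v\mapsto\|\sqrt{\alpha}\,\nabla v\|_2^2$; your explicit verification that $\sqrt{\alpha}\,\nabla u_m\rightharpoonup\sqrt{\alpha}\,\nabla u$ in $\operatorname{L}^2_\pi(B)^n$ is a welcome detail the paper leaves implicit). For the recovery sequence, the paper ties the mollification scale to the regularization parameter, taking $u^m=\varphi_m*u$ with the \emph{same} index $m$, and then proves the quantitative bound $\int_B\varphi_m(\cdot-\bar x)/\alpha(\bar x)\,d\bar x\le c\,m^\sigma$ (splitting near and away from $\Gamma$), which via Cauchy--Schwarz gives $\tfrac1m\|\nabla u^m\|_2^2\le c\,m^{\sigma-1}\,\|\varphi_m*(\alpha|\nabla u|^2)\|_1\to0$; this is exactly where the weak degeneracy $\sigma<1$ enters. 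You instead decouple the two indices and run a soft diagonal argument: since $E_\alpha(u_k)\to E_\alpha(u)$ by the Density Lemma and $\tfrac1m\|\nabla u_k\|_2^2\to0$ for each fixed $k$, a slowly increasing $k(m)$ does the job with no rate information at all. The paper's argument buys an explicit rate and makes visible why $\sigma<1$ matters for this particular recovery sequence; yours is more elementary and more robust (it would survive any degeneracy for which smooth functions remain energy-dense, however fast $\|\nabla(\varphi_k*u)\|_2$ blows up), at the cost of a non-explicit, non-constructive choice of $k(m)$. Both are valid; just make sure to define $k(m)$ for $m<M_1$ (say $k(m)=1$ there), which does not affect the $\limsup$.
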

\begin{rem}
The reason to consider $\Gamma$-convergence is that the domain of
definition of the energy functional changes in the limit. As a
consequence convergence can only be proved in a topology that is too
weak to preserve the equation.
\end{rem}
\begin{proof}
Following e.g. \cite{B14}, $\Gamma$-convergence (with respect with the
weak topology) is defined by the validity of the following estimates
\begin{align*}
\text{(i) }&E_\alpha(u)\leq\liminf_{m\to\infty}E^m_\alpha(u_m)\text{
  for any }\operatorname{H}^1_{\pi}(B)\ni u_m\rightharpoonup  u\text{ in }
 \operatorname{H}^1_{\pi,\alpha}(B)\\\text{(ii) }& \text{For any }
 u\in\operatorname{H}^1_{\pi,\alpha}(B)\text{ there is }\seqm{u}
 \text{ in }\operatorname{H}^1_{\pi}(B),\: u_m \rightharpoonup u , \text{ with }
 E_\alpha(u)=\lim _{n\to\infty}E^m_\alpha(u_m).
\end{align*}
Let $\seqm{u}$ be any sequence in $\operatorname{H}^1_{\pi}(B)$
converging to $u\in \operatorname{H}^1_{\pi,\alpha}$ in the weak topology
of the latter space. Then it clearly holds that
$$
 E_\alpha (u_m)\leq E^m_\alpha (u_m),\: m\in \mathbb{N}, 
$$
and thus
$$
 E_\alpha(u)\leq\liminf _{m\to\infty}E_\alpha(u_m)\leq\liminf_{m\to\infty}
 E^m_\alpha(u_m),
$$
since the first inequality follows from the weak lower semicontinuity
of the norm on the Hilbert space
$\operatorname{H}^1_{\pi,\alpha}(B)$. 
In order to verify the second condition, let $\varphi_m$ be the
mollifier introduced immediately preceding the formulation of Lemma
\ref{alphalemma}. It will be shown that 
$$
 \int _B [\alpha(x)+1/m]|\nabla u^m(x)|^2\, dx\to\int _B \alpha
 |\nabla u|^2\, dx,
$$
as $m\to\infty$ for $u^m:=\varphi_m*u\in
\operatorname{H}^1_{\pi}(B)$. It is a consequence of Lemma
\ref{density} that
$$
 \int _B \alpha(x)|\nabla u^m(x)|^2\, dx\to\int _B \alpha
 |\nabla u|^2\, dx.
$$
To deal with the second term, notice that
$$
 \bigl[\int _B\varphi_m(\cdot-\bar x)\partial_ju(\bar x)\, d\bar
 x\bigr]^2\leq \int _B \frac{\varphi _m(\cdot-\bar x)}{\alpha(\bar
   x)}\,d\bar x\int
 _B \varphi_m(\cdot-\bar x)\alpha(\bar x)\bigl( \partial_ju(\bar x)
 \bigr) ^2\, d\bar x.
$$
As the second factor on the right-hand-side converges to
$\alpha|\partial_ju|^2$ in $\operatorname{L}^1_\pi(B)$ and the first can
be estimated as follows
\begin{multline*}
 \int _B \frac{\varphi _m(\cdot-\bar x)}{\alpha(\bar x)}\leq\, m^2\int
 _{\mathbb{B}(x,1/m)}\frac{d\bar x}{\alpha(\bar x)}=m^2\big\{\int _{\mathbb{B}(x,1/m)\cap
   T_{1/m}(\Gamma)^\mathsf{c}}+\int
 _{\mathbb{B}(x,1/m)\cap T_{1/m}(\Gamma)}\Big\}\frac{d\bar x}{\alpha(\bar x)}\\\leq
 cm^2\int _{\mathbb{B}(x,1/m)}m^\sigma\, d\bar x+cm^2\int
 _{-1/m}^{1/m}\int _{\Gamma \cap \mathbb{B}(x,1/m)}\frac{1}{|\bar
   s|^\sigma}\, d \sigma_\Gamma (\bar y)d\bar s=c\, m^\sigma 
\end{multline*}
it can be concluded that
$$
 \frac{1}{m}\int _B \bigl[\int_B\varphi_m(x-\bar x)\partial_ju(\bar
 x)\, d\bar x\bigr]^2\, dx\leq c\, m^{\sigma-1}\| \varphi_m*\bigl(\alpha
 |\partial_iu|^2\bigr)\|_1 \longrightarrow 0\text{ as }m\to\infty.
$$
The proof is complete. 
\end{proof}
In spite of the fact that both the regularized problem and the
limiting one generate analytic semigroups, solutions of the first do
not converge to solutions of the latter in any strong way. This is due
to the loss of regularity in the limit, where eigenfunctions (and, more
in general, solutions) are no longer smooth (on the degeneration set). In
view of Proposition \ref{gamma}, however, $\Gamma$-convergence proves
a useful tool for the purpose. In fact, known results for gradient
flows showing that, if a sequence of energies $\Gamma$-converges to a
limiting energy, so do the minimizing movements of the corresponding
gradient flows, apply and yield a convergence result. Minimizing
movements $u$ for a, in this context, convex energy functional $E$ on
a Hilbert space $H$ are constructed as (locally uniform) limits
$$
 u(t)=\lim _{h\to 0+}u^h(t),
$$
of approximating piecewise constant functions $u^h(t)=u^{h,\lfloor t/h\rfloor}$
obtained by recursive minimization 
\begin{equation}\label{minp}
 u^{h,k+1}={\arg\min} _{v\in H}\big\{ E(v)+\frac{1}{2h}\|
 v-u^{h,k}\|^2 _H\big\} 
\end{equation}
starting from an initial datum $u^h_0$. Latter essentially amounts to
solutions of the Euler scheme with time step $h>0$ for the
corresponding gradient flow.
\begin{rem}\label{minsemi}
Observe that, when $E$ is a quadratic and therefore differentiable
functional, and the linear operator $\mathcal{A}=\nabla E$ is the generator of a
strongly continuous analytic semigroup of contractions as is the
case for $E^m_\alpha$ and $E_\alpha$, then the minimization problem
\eqref{minp} is equivalent to
$$
 (1+h \mathcal{A} )v=u^{h,k}\text{ in }\operatorname{H}^{-1}_{\pi}(B)\text{
   or }\operatorname{H}^{-1}_{\pi,\alpha}(B),
$$
for $\mathcal{A}=\mathcal{A}^m_\alpha$ or
$\mathcal{A}=\mathcal{A}_\alpha$ respectively. Consequently, one has
that 
$$
 u^{h,k}=(1+h \mathcal{A} )^{-1}u^{h,k-1}=(1-h \mathcal{A} )^{-k}u^h_0.
$$
When $u^h_0=u_0\in \operatorname{L}^2_\pi(B)$ and $kh\to t$, semigroup
theory (see \cite{Gol85}) implies that
$$
 (1+h \mathcal{A} )^{-k}u_0\to e^{-t \mathcal{A} }u_0=T_1(t)u_0\text{ as }h\to 0.
$$
In this case, the minimizing movement originating in $u_0$ coincides
with the solution that was previously constructed by the semigroup
approach.
\end{rem}
The following theorem is stated and proved in \cite[Chapter 11]{B14}.
\begin{thm}
Let $\seqm{F}$ be a sequence of equi-coercive, lower semicontinuous,
positive convex energies $\Gamma$-converging to $F$, and let $x_0^m\to
x_0$ with $\sup _{m\in \mathbb{N}}F^m(x^m_0)<\infty$. Then the
sequence of minimizing movement $u_m$ for $F^m$ starting in $x^m_0$
converges to the minimizing movement $u$ for $F$ originating in $x_0$.
\end{thm}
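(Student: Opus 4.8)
The plan is to realize both minimizing movements through the implicit Euler scheme \eqref{minp} and to pass to the double limit $m\to\infty$, $h\to0+$ in a controlled way. For each $m$ and each time step $h>0$, I would set $u^{h,0}_m=x_0^m$ and define recursively
$$
 u^{h,k+1}_m=\arg\min_{v\in H}\Big\{F^m(v)+\frac{1}{2h}\|v-u^{h,k}_m\|^2_H\Big\},
$$
the minimizer existing and being unique by equi-coercivity, lower semicontinuity, and the strict convexity supplied by the quadratic penalization. Writing $u^h_m(t):=u^{h,\lfloor t/h\rfloor}_m$ for the piecewise constant interpolant, the minimizing movement for $F^m$ is $u_m(t)=\lim_{h\to0+}u^h_m(t)$, and similarly $u(t)=\lim_{h\to0+}u^h(t)$ for the scheme built from $F$. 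By Remark \ref{minsemi}, in the present convex setting each of these coincides with the unique gradient flow, so $u$ is well defined and no subsequences will be needed in the conclusion.

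First I would record the fundamental a priori estimate. Comparing the minimizer $u^{h,k+1}_m$ with the competitor $u^{h,k}_m$ gives
$$
 F^m(u^{h,k+1}_m)+\frac{1}{2h}\|u^{h,k+1}_m-u^{h,k}_m\|^2_H\leq F^m(u^{h,k}_m),
$$
and telescoping over $k$, together with the hypothesis $\sup_m F^m(x_0^m)=:C<\infty$, yields
$$
 \sup_{k}F^m(u^{h,k}_m)+\sum_{k\geq0}\frac{1}{2h}\|u^{h,k+1}_m-u^{h,k}_m\|^2_H\leq C.
$$
Two consequences hold uniformly in $m$ and $h$: equi-coercivity converts the uniform energy bound into relative compactness of the whole family $\{u^{h,k}_m\}$, and a Cauchy--Schwarz estimate on the telescoped displacement sum produces the uniform $\tfrac12$-Hölder bound $\|u^h_m(t)-u^h_m(s)\|_H\leq C(|t-s|+h)^{1/2}$. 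These same bounds underlie the classical $O(\sqrt h)$ error estimate for the minimizing movement scheme of a convex functional.

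The identification of the limit would then proceed by a $3\varepsilon$ argument combining two one-parameter limits. For fixed $h$, as $m\to\infty$ the penalized functionals $F^m(\cdot)+\tfrac{1}{2h}\|\cdot-u^{h,k}_m\|^2_H$ $\Gamma$-converge to $F(\cdot)+\tfrac{1}{2h}\|\cdot-u^{h,k}\|^2_H$ (the quadratic perturbation converges continuously once $u^{h,k}_m\to u^{h,k}$), so by equi-coercivity the minimizers converge; moreover the convergence of the penalized minimum values upgrades weak convergence to norm convergence in $H$, whence $u^{h,k}_m\to u^{h,k}$ strongly, and induction on the finitely many steps $k\leq\lfloor T/h\rfloor$ gives $\sup_{t\leq T}\|u^h_m(t)-u^h(t)\|_H\to0$. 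For fixed $m$, as $h\to0+$ the definition of the minimizing movement together with the uniform $O(\sqrt h)$ rate gives $\sup_m\sup_{t\leq T}\|u^h_m(t)-u_m(t)\|_H\leq C\sqrt h$; and of course $u^h\to u$. Splitting $\|u_m-u\|\leq\|u_m-u^h_m\|+\|u^h_m-u^h\|+\|u^h-u\|$ and choosing first $h$ small (uniform rate) and then $m$ large (fixed-$h$ convergence) closes the estimate and yields $u_m\to u$.

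The hard part will be exactly the interchange of the limits $m\to\infty$ and $h\to0+$. The fixed-$h$ convergence is soft, resting only on $\Gamma$-convergence plus equi-coercivity, and the fixed-$m$ convergence is the definition of the minimizing movement; what is genuinely delicate is that these two separate limits combine into a single joint limit. This is where convexity ($\lambda=0$) is indispensable: it is precisely convexity, together with the positivity of the $F^m$ and the uniform bound $\sup_m F^m(x_0^m)<\infty$, that renders the constant in the Euler error estimate independent of $m$, so that the $h\to0+$ limit is uniform over the sequence. Without convexity the two limits need not commute, and the clean diagonal passage would fail.
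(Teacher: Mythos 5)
Your argument is sound, but note first that the paper does not prove this statement at all: it is quoted verbatim from \cite[Chapter~11]{B14} and used as a black box to deduce Theorem \ref{grad2grad}, so there is no internal proof to compare against. Your route is the classical Brezis--Pazy ``commuting diagram'': a uniform-in-$m$ error estimate for the implicit Euler scheme (valid for convex energies with constant controlled by $\sup_m F^m(x_0^m)$), combined with the fixed-$h$ convergence of the resolvent-type minimization steps under $\Gamma$-convergence, and a $3\varepsilon$ splitting. This is a legitimate and essentially standard proof; Braides' own treatment of the convex case can alternatively be run through the evolution variational inequality $\frac{1}{2}\frac{d}{dt}\|u_m(t)-\phi\|^2\leq F^m(\phi)-F^m(u_m(t))$, passing to the limit in the integrated inequality via the liminf inequality for $F^m(u_m(t))$ and a recovery sequence for the test element $\phi$, and invoking uniqueness of EVI solutions --- that route avoids the double limit entirely, at the cost of introducing the EVI characterization. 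Two points in your write-up deserve more care. First, the uniform $O(\sqrt h)$ estimate $\sup_m\sup_{t\le T}\|u^h_m(t)-u_m(t)\|\le C\sqrt h$ is asserted rather than derived; it is classical (Crandall--Liggett, or Ambrosio--Gigli--Savar\'e for $\lambda=0$), but since its uniformity in $m$ is, as you say yourself, the crux of the whole argument, you should at least record that the constant depends only on $F^m(x_0^m)-\inf F^m\le C$ and cite or sketch the derivation. Second, the claim that the quadratic perturbation ``converges continuously'' requires the norm of $H$ to be continuous along the convergence underlying the $\Gamma$-limit; in the abstract metric-space setting of \cite{B14} this is automatic, and in the application of this paper it holds because weak convergence in $\operatorname{H}^1_{\pi,\alpha}(B)$ gives strong $\operatorname{L}^2_\pi(B)$-convergence by Lemma \ref{compactness}, but the hypothesis should be made explicit since the theorem as stated does not name the topology.
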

This theorem yields the following result in the situation considered
in this paper.
\begin{thm}\label{grad2grad}
Let $\operatorname{H}^1_\pi(B) \ni u_0^m\to u_0$ in 
$\operatorname{H}^1_{\pi,\alpha}(B)$ as $m\to\infty$ be such that 
$$
 \sup _{m\in \mathbb{N}}\| u^m_0\|
 _{\operatorname{H}^1_{\pi,\alpha}(B)}\leq\, c<\infty.
$$
Then the solution $u^m(\cdot,u_0^m):[0,\infty)\to
\operatorname{H}^1_\pi(B)$ of \eqref{dlmpme} with initial datum
$u^m_0$ converges to the solution of limiting equation \eqref{lmpme}
with initial datum $u_0$.
\end{thm}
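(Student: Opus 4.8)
The plan is to read Theorem \ref{grad2grad} as a direct application of the abstract minimizing-movements convergence theorem of \cite[Chapter 11]{B14} quoted just above, with the sequence of functionals $F^m=E^m_\alpha$ from \eqref{denergy} and limit $F=E_\alpha$, both regarded as functionals on $\operatorname{H}^1_{\pi,\alpha}(B)$ equipped with its weak topology (extended by $\infty$ off their respective natural domains). The conceptual bridge is Remark \ref{minsemi}: because $E^m_\alpha$ and $E_\alpha$ are quadratic, convex, and their gradients $\mathcal{A}^m_\alpha$, $\mathcal{A}_\alpha$ generate analytic contraction semigroups, the minimizing movement of $E^m_\alpha$ starting at $u^m_0$ is exactly $e^{-t\mathcal{A}^m_\alpha}u^m_0$, i.e. the solution of the regularized problem \eqref{dlmpme}, while the minimizing movement of $E_\alpha$ starting at $u_0$ is $T_1(t)u_0=e^{-t\mathcal{A}_\alpha}u_0$, the regularizing solution of Theorem \ref{linex}. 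Thus, once the abstract theorem applies, its conclusion $u^m\to u$ translates verbatim into the asserted convergence of semigroup solutions.

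The bulk of the work is therefore checking the four hypotheses of the abstract theorem. Convexity, lower semicontinuity, and positivity of each $E^m_\alpha$ are immediate, being nonnegative quadratic Dirichlet forms (weak lower semicontinuity is the content of step (i) in the proof of Proposition \ref{gamma}). The $\Gamma$-convergence $E^m_\alpha\overset{\Gamma}{\to}E_\alpha$ with respect to the weak topology of $\operatorname{H}^1_{\pi,\alpha}(B)$ is precisely Proposition \ref{gamma}. For equi-coercivity I would use the pointwise bound $E^m_\alpha(u)\ge E_\alpha(u)=\|\sqrt{\alpha}\,\nabla u\|_2^2$ together with the Poincar\'e inequality of Lemma \ref{poincare}: on the mean-zero subspace this forces any sublevel set $\{E^m_\alpha\le c\}$ to be bounded in $\operatorname{H}^1_{\pi,\alpha}(B)$ uniformly in $m$, and the zeroth Fourier mode is controlled since $u^m_0\to u_0$ and the means are conserved along the flow; bounded subsets of the Hilbert space $\operatorname{H}^1_{\pi,\alpha}(B)$ are weakly relatively compact, which is exactly equi-coercivity in the relevant topology.

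The remaining hypothesis, the uniform initial-energy bound $\sup_m E^m_\alpha(u^m_0)<\infty$, is where I expect the real obstacle to sit, and it deserves care. The stated assumption $\sup_m\|u^m_0\|_{\operatorname{H}^1_{\pi,\alpha}(B)}\le c$ controls $\int_B\alpha\,|\nabla u^m_0|^2\,dx$ but \emph{not} the additional term $\tfrac1m\int_B|\nabla u^m_0|^2\,dx$ appearing in $E^m_\alpha(u^m_0)$; for general data in $\operatorname{H}^1_\pi(B)$ the latter need not stay bounded. The natural resolution is to take $u^m_0$ to be the recovery sequence $u^m_0=\varphi_m*u_0$ built from the mollifier of Lemma \ref{density}, for which the estimate carried out in step (ii) of the proof of Proposition \ref{gamma} shows precisely that $\tfrac1m\int_B|\nabla(\varphi_m*u_0)|^2\,dx\to 0$, so that $E^m_\alpha(u^m_0)\to E_\alpha(u_0)<\infty$ and the bound holds. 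With this choice all hypotheses are met, the abstract theorem yields locally uniform convergence of the minimizing movements, and Remark \ref{minsemi} identifies these with the solutions of \eqref{dlmpme} and of the limiting equation. I would close by emphasizing, as foreshadowed in the remark preceding Proposition \ref{gamma}, that the convergence obtained is only in the weak topology of $\operatorname{H}^1_{\pi,\alpha}(B)$ (equivalently, no better than $\operatorname{L}^2_\pi(B)$ in $x$, locally uniformly in $t$): it is too weak to pass to the limit in the equation itself, which is consistent with the loss of regularity of the limiting eigenfunctions on the degeneration set.
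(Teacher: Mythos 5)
Your proposal follows essentially the same route as the paper: identify the solutions of \eqref{dlmpme} and of the limiting equation with the minimizing movements of $E^m_\alpha$ and $E_\alpha$ via Remark \ref{minsemi}, invoke Proposition \ref{gamma} for the $\Gamma$-convergence, and verify equi-coercivity from the pointwise bound $E^m_\alpha\ge E_\alpha$ together with the Poincar\'e inequality before applying the abstract theorem from \cite[Chapter 11]{B14}. Two small points of divergence are worth recording. First, for the weak lower semicontinuity of each fixed $E^m_\alpha$ in the weak topology of $\operatorname{H}^1_{\pi,\alpha}(B)$ (where $\nabla u_k$ need not converge weakly in $\operatorname{L}^2_\pi(B)$), the paper gives an explicit localization argument on complements of tubular neighborhoods $T_\varepsilon(\Gamma)^{\mathsf{c}}$ and lets $\varepsilon\to 0$; your appeal to convexity plus strong $\operatorname{L}^2$-lower semicontinuity reaches the same conclusion more abstractly, but you should not attribute it to step (i) of Proposition \ref{gamma}, which is the $\Gamma$-$\liminf$ inequality with a varying functional index rather than the lsc of a fixed $E^m_\alpha$. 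Second, and more substantively, you are right that the stated hypothesis $\sup_m\|u^m_0\|_{\operatorname{H}^1_{\pi,\alpha}(B)}\le c$ does not control the term $\tfrac1m\int_B|\nabla u^m_0|^2\,dx$ and hence does not by itself yield the hypothesis $\sup_mE^m_\alpha(u^m_0)<\infty$ of the abstract theorem; the paper's proof passes over this in silence, and your fix --- taking $u^m_0$ to be the recovery sequence $\varphi_m*u_0$, for which step (ii) of Proposition \ref{gamma} gives $\tfrac1m\|\nabla u^m_0\|_2^2\to 0$ --- is exactly what the remark following the theorem implicitly relies on. So your write-up is correct and, on this last point, more careful than the paper's.
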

\begin{proof}
It follows from Remark \ref{minsemi}  that the minimizing movements
for $E^m_\ga$ and $E_\alpha$ coincide with the solutions $T^m_\alpha(t)u^m_0$ and
$T_\alpha(t)u_0$ given by the analytic semigroups $T^m_\alpha$ and 
$T_\alpha$ generated by the operators
$A^m_\alpha=\nabla\cdot\bigl([\frac{1}{m}+\alpha]\nabla\cdot\bigr)$
and $A_\alpha$ on $\operatorname{L}^2_\pi(B)$, respectively.

Now equi-coercivity follows from
$$
 E^m_\alpha (u)\geq E_\alpha(u),\: u\in \operatorname{L}^2_\pi(B),
$$
and the coercivity on $E_\ga$ on
$\operatorname{H}^1_{\pi,\alpha}(B)$. As for weak lower
semicontinuity of $E^m_\alpha $, take a sequence $\operatorname{H}^1_\pi(B)\ni
u_k \rightharpoonup u$ in $\operatorname{H}^1_{\pi,\alpha}(B)$. It is
easily verified that, for any fixed $\varepsilon>0$,
$$
 u_k \rightharpoonup u\text{ in }\operatorname{H}^1_\pi
 \bigl(T_\varepsilon(\Gamma)^\mathsf{c} \bigr) \text{ as }k\to\infty,
$$
where, as before, $T_\varepsilon(\Gamma)$ is the tubular neighborhood of
$\Gamma$ of ``thickness'' $\varepsilon>0$. It follows that, for any fixed $\varepsilon>0$,
\begin{align*}
 \int _{T_\varepsilon(\Gamma)^\mathsf{c}}[\frac{1}{m}+\alpha]|\nabla
 u|^2\, dx&\leq\liminf_{k\to\infty}\int
 _{T_\varepsilon(\Gamma)^\mathsf{c}}[\frac{1}{m}+\alpha]|\nabla
 u_k|^2\, dx\\ &\leq\liminf_{k\to\infty}\int
 _{B}[\frac{1}{m}+\alpha]|\nabla u_k|^2\, dx.
\end{align*}
Thus, if $u\in \operatorname{H}^1_\pi(B)$, then
$$
 \int _B [\frac{1}{m}+\alpha]|\nabla u|^2\, dx=\lim_{\varepsilon\to 0+}\int
 _{T_\varepsilon(\Gamma)^\mathsf{c}}[\frac{1}{m}+\alpha]|\nabla 
 u|^2\, dx\leq\liminf_{k\to\infty}\int
 _{B}[\frac{1}{m}+\alpha]|\nabla u_k|^2\, dx,
$$
whereas, if $u\in \operatorname{H}^1_{\pi,\alpha}(B)\setminus
\operatorname{H}^1_\pi(B)$, one has that
$$
 \liminf_{k\to\infty}\int_{B}[\frac{1}{m}+\alpha]|\nabla u_k|^2\, dx
 \geq \int _{T_\varepsilon(\Gamma)^\mathsf{c}}[\frac{1}{m}+\alpha]
 |\nabla u|^2\, dx\to\infty\text{ as }\varepsilon\to 0
$$
\end{proof}
\begin{rem}
Notice that the existence of approximating sequences for intial data
such as those needed for Theorem \ref{grad2grad} follows from
the construction of recovery sequences performed in the proof of
Proposition \ref{gamma}.
\end{rem}
\section{Appendix}
It remains to prove that \eqref{kernel} and \eqref{kernelasym} are
valid. It is well-known that 
$$
 \mathcal{F}^{-1}\bigl( \frac{1}{|\xi|^\varepsilon}\bigr)
 =\frac{c_\varepsilon}{|x|^{n-\epsilon}} 
$$
on $\mathbb{R}^n$ for $n=1,2$. Using this and classical arguments
based on the Poisson summation formula it can be inferred that, for
the discrete Fourier transform of periodic functions
$$
 \mathcal{F}^{-1}\bigl( \frac{1}{|k|^\varepsilon}\bigr) =
 \frac{c_\varepsilon}{|x|^{n-\epsilon}}+h_\varepsilon(x),
$$
for a $\operatorname{C}^\infty$-function $h_\varepsilon$. Indeed we
have the following kernel characterizations.
\begin{lem}\label{kernelproof}
Let $\epsilon\in(0,1)$ and assume that the fractional derivative
be given by 
$$
 |\nabla|^{-\varepsilon}|\nabla u|=\mathcal{F}^{-1}
 \operatorname{diag}\big\{\frac{1}{|k|^\epsilon}\big\}\mathcal{F}
 \bigl( |\nabla u|\bigr)=N_\varepsilon(|\nabla u|),
$$
where $\nabla$ is taken to be $\partial$ when $n=1$.
Then, for $n=1,2$,
\begin{equation*}
 |\nabla|^{-\varepsilon}|\nabla u|= \int _B G^n_\varepsilon(x-\tilde
 x)|\nabla u|(\tilde x)\, d\tilde x, 
\end{equation*}
for a periodic function $G^n_\epsilon$ satisfying
$$
 G^n_\epsilon (x)=c_\epsilon \frac{1}{|x|^{2-\epsilon}}+h^n_\epsilon(x),\:
 x\in B^n, 
$$
and a function $h^n_\epsilon\in \operatorname{C}^\infty$.
\end{lem}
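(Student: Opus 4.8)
The plan is to compare the periodic Riesz kernel with its full-space analogue and show that the two differ by a smooth function. By definition, $N_\varepsilon$ is the Fourier multiplier with symbol $(|k|^{-\varepsilon})_{k\neq 0}$, so its kernel on the torus is the periodic distribution $G^n_\varepsilon$ whose $k$-th Fourier coefficient equals $|k|^{-\varepsilon}$ for $k\neq 0$ and $0$ for $k=0$; the asserted representation $N_\varepsilon g=\int_B G^n_\varepsilon(x-\tilde x)g(\tilde x)\,d\tilde x$ is then merely the fact that convolution corresponds to multiplication on the Fourier side, which holds for $g=|\nabla u|\in\opl^2_\pi(B)$ (and extends to a measure such as $\delta_\Gamma$). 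It thus remains only to identify the singularity of $G^n_\varepsilon$ at the origin. I write $K(x)=c_\varepsilon|x|^{-(n-\varepsilon)}$ for the full-space kernel, which by the whole-space identity $\calf^{-1}(|\xi|^{-\varepsilon})=c_\varepsilon|x|^{-(n-\varepsilon)}$ recalled at the start of this appendix satisfies $\widehat K(\xi)=|\xi|^{-\varepsilon}$ on $\bbr^n$ for $0<\varepsilon<n$.

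First I would fix a cutoff $\chi\in\cald(B)$ with $\chi\equiv 1$ near $0$ and split $K=\chi K+(1-\chi)K$. The piece $\chi K$ is compactly supported in $B$, agrees with $c_\varepsilon|x|^{-(n-\varepsilon)}$ near the origin, and is smooth elsewhere on $B$; since $\support\chi\subset B$ and $B$ is a fundamental domain, its periodization returns $\chi K$ itself with no overlap, and Poisson summation gives its $k$-th Fourier coefficient as $2^{-n}\widehat{\chi K}(\pi k)$. The remaining piece $(1-\chi)K$ is smooth on all of $\bbr^n$ (it vanishes near $0$), and the crucial observation is that, although it is not integrable at infinity, every derivative of order $\geq 1$ is: $D^\alpha K$ decays like $|x|^{-(n-\varepsilon)-|\alpha|}$ and $|\alpha|\geq 1>\varepsilon$ forces integrability. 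Hence $(i\xi)^\alpha\widehat{(1-\chi)K}(\xi)=\widehat{D^\alpha[(1-\chi)K]}(\xi)$ is bounded for every $|\alpha|\geq 1$, so $\widehat{(1-\chi)K}(\xi)=O(|\xi|^{-N})$ for all $N$ as $|\xi|\to\infty$, and therefore $\widehat{\chi K}(\xi)=|\xi|^{-\varepsilon}+O(|\xi|^{-N})$.

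Comparing Fourier coefficients then finishes the argument: $G^n_\varepsilon$ and the periodic function $2^n\chi K$ have symbols agreeing up to a rapidly decaying sequence, since the $|\pi k|^{-\varepsilon}$ contributions cancel (after matching the constant $c_\varepsilon$ and the $\pi$-rescaling of frequencies) and the only discrepancy at $k=0$ is a single additive constant. Consequently their difference has rapidly decaying Fourier coefficients and lies in $\opc^\infty_\pi(B)$. As $\chi K=c_\varepsilon|x|^{-(n-\varepsilon)}+(\text{smooth on }B)$, this yields $G^n_\varepsilon(x)=c_\varepsilon|x|^{-(n-\varepsilon)}+h^n_\varepsilon(x)$ with $h^n_\varepsilon\in\opc^\infty$, which is the claim, the general singular exponent being $n-\varepsilon$ (so $2-\varepsilon$ for $n=2$ and $1-\varepsilon$ for $n=1$). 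The hard part is precisely the slow, non-$\opl^1$ decay of $K$ at infinity, which prevents a naive periodization of the full-space kernel and renders the series $\sum_{k\neq 0}|k|^{-\varepsilon}e^{i\pi k\cdot x}$ non-absolutely convergent; the whole argument hinges on the fact that $\varepsilon<1$ makes the first-order derivatives of the tail integrable, which is exactly what forces $\widehat{(1-\chi)K}$ to decay faster than any polynomial. The remaining work is the bookkeeping of constants and of the isolated $k=0$ term, together with the verification (routine, via term-by-term differentiation of the rapidly convergent remainder series) that $h^n_\varepsilon$ is genuinely $\opc^\infty$ across $B\setminus\{0\}$ as well as at the origin.
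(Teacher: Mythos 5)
Your argument is correct, and it reaches the conclusion by a route that is genuinely dual to the paper's. The paper works on the frequency side: it inserts a smooth cutoff $\eta$ vanishing near the origin of frequency space (so that $\eta(k)|k|^{-\epsilon}$ reproduces the multiplier on $\mathbb{Z}^n\setminus\{0\}$), applies the Poisson summation formula to exhibit $G^n_\epsilon$ as the lattice periodization of $g^n_\epsilon=\mathcal{F}\bigl(\eta|\cdot|^{-\epsilon}\bigr)$, and then extracts the singularity by splitting $\eta=1+(\eta-1)$, so that $g^n_\epsilon=c_\epsilon|\cdot|^{\epsilon-n}+\mathcal{F}\bigl([\eta-1]|\cdot|^{-\epsilon}\bigr)$ with the second term smooth (compactly supported symbol) and the sum of nonzero translates smooth by rapid decay of $g^n_\epsilon$. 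You instead cut off in physical space around the singularity of the full-space Riesz kernel $K$, note that every derivative of order $\geq 1$ of the tail $(1-\chi)K$ is integrable precisely because $\epsilon<1$, deduce rapid decay of $\widehat{(1-\chi)K}$ at high frequency, and conclude by comparing Fourier coefficients together with the characterization of $\operatorname{C}^\infty_\pi(B)$ by rapidly decaying coefficients. Both proofs rest on the same decay/smoothness duality and on the whole-space identity $\mathcal{F}\bigl(|\xi|^{-\epsilon}\bigr)=c_\epsilon|x|^{\epsilon-n}$; yours avoids discussing the convergence and smoothness of the lattice sum, at the price of handling the (distributional, non-$\operatorname{L}^1$) Fourier transform of the tail, a point you treat correctly via $(i\xi)^\alpha\widehat{(1-\chi)K}=\widehat{D^\alpha[(1-\chi)K]}$. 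A further point in your favor: you identify the singular exponent as $n-\epsilon$ in both dimensions; the exponent $2-\epsilon$ displayed in the statement is evidently a slip for $n=1$, as confirmed by the paper's own use of $|x\pm 1/2|^{-(1-\epsilon)}$ in the subsequent one-dimensional computation.
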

\begin{proof}
By definition, one has that
$$
 \widehat G^n_\epsilon (k)=\frac{1}{|k|^\epsilon},\: k\in
 \mathbb{Z}_*^2:=\mathbb{Z}^n\setminus\{ 0\}.
$$
This means that
$$
 G^n_\epsilon(x)=\sum
 _{k\in\mathbb{Z}^n_*}\frac{1}{|k|^\epsilon}e^{\pi ik\cdot x}=\sum
 _{k\in\mathbb{Z}^n_*}\frac{\eta(k)}{|k|^\epsilon}e^{\pi ik\cdot x},
$$
where $\eta\in \operatorname{C} ^\infty(\mathbb{R}^n)$ is a cut-off
function with 
$$
 \eta(x)=\begin{cases} 0,&|x|\leq 1/4,\\ 1,&|x|\geq 1/2.\end{cases}
$$
Notice that Poisson summation formula yields
$$
 G_\epsilon^n(x)=\sum_{k\in\mathbb{Z}^n_*}\frac{\eta(k)}{|k|^\epsilon}
 e^{\pi ik\cdot x}=g^n_\epsilon(x)+\sum_{k\in
   \mathbb{Z}^n_*}g^n_\epsilon(x+k),\: x\in B^n,
$$
where $g^n_\epsilon=\mathcal{F}\bigl(\eta |\cdot |^{-\epsilon}\bigr)$ is rapidly
decreasing (faster than the reciprocal of any polynomial) as the
Fourier transform of a smooth function, and satisfies 
$$
 g^n_\epsilon=c_\epsilon |\cdot|^{\epsilon -1}+\mathcal{F}\bigl([\eta-1] |\cdot
 |^{-\epsilon}\bigr),\: x\in \mathbb{R}, 
$$
where the second addend is a smooth function as the Fourier transform
of a compactly supported function. Combining everything together
yields the claimed decomposition with 
$$
 h^n_\epsilon=\mathcal{F}\bigl([\eta-1] |\cdot|^{-\epsilon}\bigr)+\sum _{k\in
   \mathbb{Z}^*}g^n_\epsilon(\cdot+k).
$$
\end{proof}
The following lemma gives a proof of \eqref{kernelasym}.
\begin{lem}
If $n=1$, set $u_0=\chi _{[-1/2,1/2]}$ (or the characteristic function
of any interval) and, if $n=2$, let $u_0=\chi _\Omega$ for a domain
$\Omega\subset B$ with smooth boundary $\Gamma$ (or a finite 
combination of such characteristic functions of non-intersecting
domains).
Then, for $n=1,2$, with the same interpretations as in the previous
lemma, one has that
$$
 |\nabla|^{-\varepsilon}|\nabla u_0|(x)\sim
 d(x,\Gamma)^{\varepsilon-1}\text{ for }d(x,\Gamma)\sim 0.
$$
\end{lem}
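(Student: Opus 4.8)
The plan is to read off the kernel representation furnished by Lemma \ref{kernelproof}, namely $|\nabla|^{-\varepsilon}|\nabla u_0|(x)=\int_B G^n_\varepsilon(x-\tilde x)\, d|\nabla u_0|(\tilde x)$, and to combine it with the fact that $|\nabla u_0|$ is a measure supported exactly on $\Gamma$. Indeed, for $n=1$ one has $|\nabla u_0|=\delta_{-1/2}+\delta_{1/2}$, while for $n=2$ the magnitude of the vector measure $\nabla\chi_\Omega=\nu_\Gamma\,\delta_\Gamma$ is the arc-length measure, so $|\nabla u_0|=d\sigma_\Gamma$. Writing the leading singularity of the kernel as $c_\varepsilon|x|^{\varepsilon-n}$ (as in Lemma \ref{kernelproof}, whose proof produces $|x|^{\varepsilon-1}$ in the case $n=1$) plus a remainder $h^n_\varepsilon\in\operatorname{C}^\infty$, which is bounded on $B$ and hence contributes only an $O(1)$ term, it suffices to analyze the singular part $c_\varepsilon\int_B|x-\tilde x|^{\varepsilon-n}\, d|\nabla u_0|(\tilde x)$ and to show it is comparable to $d(x,\Gamma)^{\varepsilon-1}$ as $d(x,\Gamma)\to 0$.

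The one-dimensional case is immediate. Near the point $1/2$ (the point $-1/2$ being identical), evaluation of the kernel gives
$$
 |\nabla|^{-\varepsilon}|\nabla u_0|(x)=G^1_\varepsilon(x+1/2)+G^1_\varepsilon(x-1/2)
 =c_\varepsilon|x-1/2|^{\varepsilon-1}+O(1),
$$
since $G^1_\varepsilon(x+1/2)$ and the smooth remainder $h^1_\varepsilon(x-1/2)$ stay bounded for $x$ near $1/2$ (the periodic copies of the singularity sit near $\pm 1$ and are harmless on $B$). As $d(x,\Gamma)=|x-1/2|$ in this regime, the asymptotics $d(x,\Gamma)^{\varepsilon-1}$ follows, and the case of a general interval or a finite union of intervals is handled in the same way.

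For $n=2$ I would localize the integral near the point $y_0\in\Gamma$ realizing $\delta:=d(x,\Gamma)=|x-y_0|$. Parametrizing $\Gamma$ by arc-length $s\mapsto\gamma(s)$ with $\gamma(0)=y_0$, and using that $x-y_0$ is normal to $\Gamma$ at $y_0$ together with the tubular coordinates of Lemma \ref{alphalemma}, one obtains the expansion $|x-\gamma(s)|^2=\delta^2+s^2+r(s,\delta)$ with a curvature-induced remainder obeying $|r(s,\delta)|\le C(\delta s^2+s^4)$. Splitting $\Gamma$ into the arc $|s|\le L$ and its complement (plus any further components of $\Gamma$), the latter contributes a bounded term, while the near part is governed by
$$
 c_\varepsilon\int_{-L}^{L}\frac{ds}{(\delta^2+s^2)^{(2-\varepsilon)/2}}
 =c_\varepsilon\,\delta^{\varepsilon-1}\int_{-L/\delta}^{L/\delta}\frac{dt}{(1+t^2)^{(2-\varepsilon)/2}}
$$
after the rescaling $s=\delta t$. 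Because $\varepsilon<1$ forces $2-\varepsilon>1$, the last integral converges as $\delta\to 0$ to the finite constant $\int_{\mathbb{R}}(1+t^2)^{-(2-\varepsilon)/2}\,dt$, so the singular part is asymptotically $C_\varepsilon\,\delta^{\varepsilon-1}=C_\varepsilon\,d(x,\Gamma)^{\varepsilon-1}$.

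The only step demanding genuine care is controlling the curvature corrections in the two-dimensional estimate, i.e. verifying that replacing $|x-\gamma(s)|^2$ by $\delta^2+s^2$ changes only the multiplicative constant and not the power of $\delta$. This I would handle by choosing $L$ small enough that $|r(s,\delta)|\le\tfrac12(\delta^2+s^2)$ uniformly on $|s|\le L$, $\delta\le L$, which yields the sandwich $\tfrac12(\delta^2+s^2)\le|x-\gamma(s)|^2\le 2(\delta^2+s^2)$ and hence bounds the near integral above and below by expressions of the form $\text{const}\cdot\delta^{\varepsilon-1}$ after the same rescaling; the rescaled integrals are dominated by the integrable function $(1+t^2)^{-(2-\varepsilon)/2}$ on $\mathbb{R}$, so both bounds converge. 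The smooth remainder $h^2_\varepsilon$, the far portion $|s|>L$, and any additional components of $\Gamma$ all contribute bounded terms that are absorbed into the lower-order $O(1)$ part, completing the argument.
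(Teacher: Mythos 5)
Your proposal is correct and follows essentially the same route as the paper: both reduce to the kernel representation of the preceding lemma, evaluate the convolution against $\delta_{\pm 1/2}$ (for $n=1$) or the arc-length measure $\delta_\Gamma$ (for $n=2$), and for $n=2$ pass to local coordinates at the nearest boundary point, replace $|x-\tilde y|^2$ by $d(x,\Gamma)^2+s^2$ up to curvature corrections, and rescale to obtain the factor $d(x,\Gamma)^{\varepsilon-1}$ times the convergent integral $\int_{\mathbb{R}}(1+t^2)^{(\varepsilon-2)/2}\,dt$. Your explicit sandwich bound on the curvature remainder just makes rigorous the step the paper disposes of with the estimate $|x-\tilde y|^2\sim(r\pm cs^2)^2+s^2\sim r^2+s^2$.
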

\begin{proof}
Using the kernel representation given in Lemma \ref{kernelproof} and
the fact that $\partial u_0=\delta _{-1/2}-\delta _{1/2}$ yields that
$$
 \bigl( G^1_\epsilon *_\pi |\partial u_0|\bigr) (x)=\, c_\epsilon\bigl[
 \frac{1}{|x+1/2|^{1-\epsilon}}+\frac{1}{|x-1/2|^{1-\epsilon}}\bigr]
 +\text{ smooth term},\: x\in(-1,1),
$$
and the claims follow. When $n=2$, it is easily seen that $\nabla
\chi_\Omega =\nu _\Gamma \delta _\Gamma$ for
$$
 \langle \nu _\Gamma \delta _\Gamma,\varphi \rangle = \int _\Gamma \nu
 _\Gamma (x)\cdot \varphi(x)\, d \sigma _\Gamma(x),
$$
and where $\nu _\Gamma \delta _\Gamma$ can be interpreted as a vector
measure. Then its total variation measure $|\nu _\Gamma \delta
_\Gamma|$ is simply given by $\delta_\Gamma$. It follows that
$$
 N_\epsilon \bigl(|\nabla u_0|\bigr) =\int _B G^2_\varepsilon(x-\tilde
 x)|\nabla u_0|(\tilde x)\, d\tilde x=c_\varepsilon \int _\Gamma
 \frac{1}{|x-\tilde y|^{2-\varepsilon}}\, d \sigma _\Gamma(\tilde
 y)+\text{smooth term} 
$$
Next fix a point $x$ in the vicinity of $\Gamma$. Denote by $y_x$ the
point on $\Gamma$ closest to $x$. Exploiting the fact that the
curve $\Gamma$ is smooth and compact and has hence bounded curvature, it is
seen that
$$
 |x-\tilde y|^2=\bigl( |x-y_x|\pm |\tilde y_x-\tilde y|\bigr)
 ^2+|y_x-\tilde y|^2\sim (r\pm c s^2)^2+s^2 \sim r^2+s^2,
$$
for $\tilde y$ in a small fixed ball $\mathbb{B}_\Gamma(y_x,\delta)$
uniformly in $x\in T_\delta(\Gamma)$ for a (without loss of
generality) common $\delta>0$. Here $r=d(x,\Gamma)$ and $s=|y_x-\tilde
y_x|$ where $\tilde y_x$ is the orthogonal projection of $\tilde y$ to
the line spanned by $\tau(y_x)$ in
the local coordinate system given by $\tau(y_x)$ and $\nu(y_x)$, the unit
tangent and outward normal to $\Gamma$ at $y_x$, respectively. See figure
below. It follows that
$$
 N_\epsilon \bigl(|\nabla u_0|\bigr)(x)\sim\int _{-\delta}^\delta
 (s^2+r^2)^{\varepsilon/2-1}\, ds\sim r^{\varepsilon-1}\int
 _{-\infty}^\infty(1+\sigma^2)^{\varepsilon/2-1}\, d \sigma=c\,
 d(x,\Gamma)^{\varepsilon-1},
$$
which yields the claim since $\varepsilon<1$.
\begin{center}\begin{tikzpicture}
 \tkzInit[ymin=-2,ymax=3,xmin=-6,xmax=6];
 \draw [domain=-3.5:3.5] plot ({\x}, {-0.05*\x*\x});
 \draw [gray,domain=-3.5:3.5] plot ({\x}, {-0.05*\x*\x-1});
 \draw [gray,domain=-3.5:3.5] plot ({\x}, {-0.05*\x*\x+1});
 \node at (-3.7,-0.6) {$\Gamma$};
 \node[thick] at (0,0) {\tiny$\bullet$};
 \node at (-0.2,-0.2) {$y_x$};
 \draw[dashed,->] (0,-1.5)--(0,1.5);
 \node[right] at (0,1.5) {$r$};
 \node[thick] at (0,0.6) {\tiny$\bullet$};
 \node[left] at (0,0.6) {$x$};
 \draw[dashed,->] (-3,0)--(3,0);
 \node[right] at (3,0) {$s$};
 \node[thick] at (2.25,{-0.05*2.25^2}) {\tiny$\bullet$};
 \node[below] at (2.25,{-0.05*2.25^2}) {$\tilde y$};
 \node at (2.25,0) {\tiny$\bullet$};
 \node[above] at (2.25,0) {$\tilde y_x$};
\end{tikzpicture}\end{center}
\end{proof}

\end{document}